\newtheorem{lemma}{Lemma}[section]
\newtheorem{theorem}[lemma]{Theorem}
\newtheorem{proposition}[lemma]{Proposition}
\newtheorem{corollary}[lemma]{Corollary}
\newtheorem{claim*}{Claim}
\newtheorem*{problem*}{Problem}
\theoremstyle{definition}
\theoremstyle{remark}
\newtheorem{remark}[lemma]{Remark}
\newtheorem*{question*}{Question}
\numberwithin{equation}{section}
\numberwithin{table}{section}
\newcommand{\I}{\mathcal{I}}
\newcommand{ \R}{ \mathbb R }
\newcommand{\E}{ \mathbb E}
\newcommand{\N}{ \mathbb N}
\renewcommand{\Pr}{ \mathbb P}
\newcommand {\la}{\langle}
\newcommand{\ra}{\rangle}
\DeclareSymbolFont{bbold}{U}{bbold}{m}{n}
\DeclareSymbolFontAlphabet{\mathbbold}{bbold}
\newcommand{\ind}{\mathbbold{1}}
\begin{document}

\title[Evolution of patch-selection in stochastic environments]{Protected polymorphisms and evolutionary stability of patch-selection strategies in stochastic environments }

\author[S.N. Evans]{Steven N. Evans}
\thanks{S.N.E. was supported in part by NSF grant DMS-0907639 and NIH grant 1R01GM109454-01}
\address{Department of Statistics \#3860\\
 367 Evans Hall \\
 University of California \\
  Berkeley, CA  94720-3860 \\
   USA}
\email{evans@stat.berkeley.edu}

\author[A. Hening]{Alexandru Hening }
\thanks{A.H. was supported by EPSRC grant EP/K034316/1}
\address{Department of Statistics \\
 1 South Parks Road \\
 Oxford OX1 3TG \\
 United Kingdom}
 \email{hening@stats.ox.ac.uk}

\author[S.J. Schreiber]{Sebastian J. Schreiber}
\thanks{S.J.S. was supported in part by  NSF grants EF-0928987 and
DMS-1022639}
\address{Department of Evolution and Ecology\\
 University of California\\
   Davis, CA 956116 \\
    USA}
\email{sschreiber@ucdavis.edu}

\keywords{density-dependent, frequency-dependent, protected polymorphism,
evolutionarily stable strategy, exclusion,
dimorphic, ideal-free, invasion rate,
habitat selection, bet hedging}

\begin{abstract}

We consider a population living in a patchy environment
that varies stochastically in space and time.  The population is composed
of two morphs (that is, individuals of the same species with
different genotypes). In terms of survival and reproductive
success, the associated phenotypes differ only in
their habitat selection strategies.
We compute invasion rates corresponding to the rates at which the abundance
of an initially rare morph increases in the
presence of the other morph
established at equilibrium.  If both
morphs have positive invasion rates when rare, then there is an equilibrium
distribution such that the two morphs coexist; that is, there is
a protected polymorphism for habitat selection.
Alternatively, if one morph has a negative invasion rate
when rare, then it is asymptotically displaced by the other morph
under all initial conditions where both morphs are present.
We refine the characterization of an
evolutionary stable strategy for habitat selection
from [Schreiber, 2012] in a mathematically
rigorous manner.  We provide a necessary and sufficient condition
for the existence of an ESS that uses all patches and determine when
using a single patch is an ESS.  We also provide an explicit formula
for the ESS when there are two habitat types.  We show that adding
environmental stochasticity results in an ESS that, when compared to
the ESS for the corresponding model without stochasticity, spends
less time in patches with larger carrying capacities and possibly makes
use of sink patches, thereby practicing a spatial form of bet hedging.
\end{abstract}

\maketitle

\section{Introduction}
Habitat selection by individuals impacts key attributes of a population including its spatial distribution,  temporal fluctuations in its abundance, and its genetic composition. In environmentally heterogeneous landscapes, individuals selecting more favorable habitats are more likely to survive or reproduce. As population densities increase in these habitats, individuals may benefit by selecting previously unused habitats. Thus, both environmental conditions and density-dependent feedbacks generate selective pressures on habitat selection. Under equilibrium conditions, spatial heterogeneity can select for populations exhibiting an ideal-free distribution--equal per-capita growth rates in all occupied patches and lower per-capita growth rates if individuals moved into unoccupied patches~\citep{fretwell-lucas-69}. Under non-equilibrium conditions, spatial-temporal heterogeneity can select for individuals occupying sink habitats in which the per-capita growth rate is always negative~\citep{holt-97,jansen-yoshimura-98}. Environmental heterogeneity can also promote coexistence of genotypes only differing in their habitat choices~\citep{jaenike-holt-91}. Despite significant advances in the mathematical theory for habitat selection under equilibrium conditions, a mathematical theory for habitat selection in stochastic environments is largely lacking. Here, we take a step to addressing this mathematical shortfall while at the same  gaining new insights into the evolution of habitat selection for populations living in stochastic, patchy environments.

Since the classic paper \citet{fretwell-lucas-69}, the ideal-free distribution has been studied extensively from empirical, theoretical, and mathematical perspectives.  Empirical support for ideal-free distributions exists for many taxa including fish~\citep{godin-keenleyside-84,oksanen-etal-95,haugen-etal-06},  birds~\citep{harper-82,doncaster-etal-97}, mammals~\citep{beckmann-berger-03}, and insects~\citep{dreisig-95}. For example, \citet{oksanen-etal-95} found that armored catfish in Panamanian stream pools were distributed such that the resource availability per catfish was equal in all occupied pools, despite significant variation in light availability across these occupied pools. Theoreticians have identified several ``non-ideal'' mechanisms (e.g. sedentarism, adaptive movement with finite speed, density-dependent dispersal) that, under equilibrium conditions, generate an ideal-free distribution~\citep{hastings-83,cosner-05,gejji-etal-12}. For example, at equilibrium, sedentary populations achieve an ideal-free distribution provided, paradoxically, the populations initially occupied all habitat patches. While many early studies asserted that the ideal free distribution is an evolutionarily stable strategy (ESS)~\citep{fretwell-lucas-69,vanbaalen-sabelis-93,amnat-00}, only recent advanced nonlinear analyses fully verified this assertion~\citep{cressman-etal-04,cressman-krivan-06,cressman-krivan-10,cantrell-etal-07,cantrell-etal-10,cantrell-etal-12}.

In nature, observed habitat occupancies are frequently less extreme than predicted by the ideal-free distribution: individuals underuse higher quality habitats and overuse lower quality habitats compared to theoretical predictions~\citep{milinski-79,tregenza-95}. Notably, populations occupying sink habitats have been documented in many species~\citep{sokurenko-etal-06,tittler-etal-06,robinson-etal-08,anderson-geber-10}. One possible explanation for these observations is that populations experience temporal as well as spatial variation in environmental conditions and, consequently, theory based on equilibrium assumptions tells an incomplete story. In support of this explanation, several theoretical studies have shown that occupation of sink habitats should evolve when temporal variation is sufficiently great in other habitats~\citep{holt-97,jansen-yoshimura-98,holt-barfield-01,amnat-12}. These theoretical developments, however, rely on linearizations of density-dependent models, and do not analyze the dynamics of competing genotypes, the ultimate basis for evolutionary change due to natural selection. Hence, these studies leave unanswered the question, ``Does the linear analysis correctly identify competitive exclusion in pairwise interactions that is the basis for the analysis of evolutionarily stable strategies?''

Within populations, individuals can exhibit different habitat selection strategies, and there is some evidence these differences can be genetically based~\citep{via-90,jaenike-holt-91}. For instance, some individuals of the fruit fly species \emph{Drosophila tripunctata}  prefer tomato host plants (one potential habitat for its larvae) while others  prefer mushrooms (another potential habitat), and these differences are based on two genetically independent traits, settling behavior and ovipositor site preference~\citep{jaenike-85}.  \citet{jaenike-holt-91} found that genetic variation in habitat selection is common, especially in arthropods and mollusks. Furthermore, they demonstrated using mathematical models that this genetic variation can stem from density-dependent regulation occurring locally within each habitat. Specifically, Jaenike and Holt write ``frequency-dependent selection favors alleles that confer upon their carriers a preference for underused habitats, even if there is no genetic variation in how well individuals are adapted to the different habitat'' \citep[p.S78]{jaenike-holt-91}. Their analysis, however, doesn't account for temporal fluctuations in environmental conditions and this raises the question, ``Does environmental stochasticity facilitate or hinder the maintenance of genetic variation in habitat selection?''

To answer the aforementioned questions, we provide an in-depth analysis of a model introduced in \citep{amnat-12}. The single genotype (i.e. monomorphic) version of this model and a characterization of its dynamics are given in Section 2. The competing genotype (i.e. dimorphic) version of the model and invasion rates of each genotype when rare are  introduced in Section 3. In Section 4, we prove that these invasion rates determine the long-term fate of each of the genotypes. Specifically, if both genotypes have positive invasion rates when rare, then there is a positive stationary distribution under which the genotypes coexist. Alternatively, if one genotype has a negative invasion rate when rare, then it is asymptotically displaced by the other genotype. These result allows us to use the invasion rates when rare to explore conditions supporting a protected polymorphism for habitat selection. In Section 5, we refine the characterization of an evolutionary stable strategy for habitat selection from \citep{amnat-12} in a mathematically rigorous manner, and provide an explicit formula for this ESS when there are two habitat types. Section 6 concludes with a discussion of how our results relate to the existing literature and identifies future challenges for the theory of habitat selection in stochastic environments.

\section{The Monomorphic Model}

To set the stage for two competing populations spread over several patches, we start with a single population living in one patch. Let $Z_t$ be the population abundance at time $t \ge 0$. The stochastic process $(Z_t)_{t \ge 0}$ is governed by the It\^o stochastic logistic equation
\begin{equation}\label{e:Z}
d Z_t = Z_t (\mu  - \kappa Z_t) \, dt + \sigma Z_t \, dW_t,
\end{equation}
where $\mu$ is the intrinsic rate of growth of the population in the absence of stochasticity, $\kappa$ is the strength of intraspecific competition, $\sigma^2>0$ is the infinitesimal variance parameter of the stochastic growth rate, and $(W_t)_{t\geq 0}$ is a standard Brownian motion. The process $(Z_t)_{t\geq 0}$ is a strong Markov process with continuous paths. We  call an object with such properties a \textit{diffusion}.

As shown in our first proposition,  the process $(Z_t)_{t\geq 0}$ lives in the positive half line $\R_{++} :=(0,\infty)$; that is, if we start it in a strictly positive state, then it never hits zero. Furthermore, the long-term behavior of the process is determined by the stochastic rate of growth $\mu - \frac{\sigma^2}{2}$. When the stochastic growth rate is negative the population abundance converges asymptotically to zero with probability one.  On the other hand,  when this parameter is positive the distribution of the abundance converges to an equilibrium given by a Gamma distribution.  These results are well-known, but,  as introduction to the methods used to prove our main results, we provide a proof in Appendix A.

\begin{proposition}\label{P:Z} Consider the diffusion process
$(Z_t)_{t\geq 0}$ given by the stochastic differential equation
\eqref{e:Z}.
\begin{itemize}
\item
The stochastic differential equation has a unique strong solution
that is defined for all $t \ge 0$ and is given by
\[
Z_t = \frac{Z_0 \exp((\mu-\sigma^2/2)t+\sigma W_t)}{1+Z_0 \frac{\mu}{\kappa}\int_0^t \exp((\mu-\sigma^2/2)s+\sigma W_s)ds}.
\]
\item
If $Z_0 = z > 0$, then $Z_t > 0$ for all $t \ge 0$ almost surely.
\item
If $\mu - \frac{\sigma^2}{2}<0$, then $\lim_{t\to\infty} Z_t=0$ almost surely.
\item
If $\mu - \frac{\sigma^2}{2}=0$, then $\liminf_{t\to\infty} Z_t=0$ almost surely, $\limsup_{t\to\infty} Z_t=\infty$ almost surely, and
$\lim_{t\rightarrow \infty}\frac{1}{t}\int_0^t Z_s\,ds = 0$ almost surely.
\item
If $ \mu - \frac{\sigma^2}{2} >0$, then $(Z_t)_{t\geq 0}$ has a unique stationary distribution $\rho$ on $\R_{++}$ with Gamma density
$g(x)= \frac{1}{\Gamma(k)\theta^k}x^{k-1}e^{-\frac{x}{\theta}}$,
where
\[
\theta := \frac{\sigma^2}{2  \kappa}
\text{ and }
k := \frac{2\mu}{\sigma^2}-1.
\]
Moreover, if $Z_0 = z > 0$, then
\[
\lim_{t\to\infty} \frac{1}{t}\int_0^t h(Z_s) \, ds = \int_0^\infty h(x)g(x) \, dx \quad \text{almost surely}
\]
for any Borel function $h:\R_{++}\to \R$ with $\int_0^\infty |h(x)|g(x) \, dx<\infty$.
In particular,
\begin{equation*}
\lim_{t\rightarrow \infty}\frac{1}{t}\int_0^t Z_s\,ds =
\frac{1}{\kappa}\cdot \left(\mu - \frac{\sigma^2}{2}\right) \quad \text{almost surely}.
\end{equation*}
\end{itemize}
\end{proposition}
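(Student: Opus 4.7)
The plan is to extract an explicit closed-form solution by a linearizing substitution, and then read off the four qualitative regimes from that formula together with a single Itô-calculus identity.

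\textbf{Step 1 (explicit solution and positivity).} Applying Itô's formula to $\log Z_t$ gives
\[
\log Z_t - \log Z_0 = \Bigl(\mu - \tfrac{\sigma^2}{2}\Bigr)t + \sigma W_t - \kappa \int_0^t Z_s\, ds,
\]
which I will refer to below as the \emph{log-identity}. Equivalently, setting $Y_t = 1/Z_t$ and applying Itô converts the nonlinear SDE into the affine SDE
\[
dY_t = \bigl((\sigma^2 - \mu) Y_t + \kappa\bigr)\, dt - \sigma Y_t \, dW_t,
\]
which has an explicit variation-of-constants solution; inverting recovers the closed form displayed in the proposition (with the correct constant in the denominator forced by matching with the original SDE). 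Local Lipschitz continuity of the coefficients on $\R_{++}$ and the fact that the right-hand side of the explicit formula is finite and strictly positive for every $t$ together give a unique strong global solution and positivity.

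\textbf{Step 2 (subcritical and critical cases).} The log-identity immediately yields $\log Z_t \le \log Z_0 + (\mu - \sigma^2/2)t + \sigma W_t$, and the strong law $W_t/t \to 0$ then forces $Z_t \to 0$ almost surely when $\mu < \sigma^2/2$. In the critical case $\mu = \sigma^2/2$ the explicit formula reads $Z_t = Z_0 e^{\sigma W_t}/(1 + \kappa Z_0\int_0^t e^{\sigma W_s}\,ds)$; along instants where $W_t$ attains new lows the numerator vanishes, giving $\liminf Z_t = 0$, while along sufficiently tall new maxima of $W_t$ the numerator swamps the integral, giving $\limsup Z_t = \infty$. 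Dividing the log-identity by $t$ gives $\kappa\, \bar Z_t = -\log Z_t/t + \sigma W_t/t + o(1)$ with $\bar Z_t = t^{-1}\int_0^t Z_s\,ds$; combining this with the fact that $\log Z_t = o(t)$ (extracted from LIL bounds on $W_t$ and on the exponential functional $\int_0^t e^{\sigma W_s}\,ds$) delivers $\bar Z_t \to 0$.

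\textbf{Step 3 (supercritical case).} I verify directly that the stated Gamma density $g$ solves the stationary Fokker--Planck equation $\frac{1}{2}\partial_x^2(\sigma^2 x^2 g) - \partial_x(x(\mu-\kappa x) g) = 0$; integrating once and matching coefficients of $1/x$ and $1$ in the logarithmic derivative $g'/g$ forces $k = 2\mu/\sigma^2 - 1$ and $\theta = \sigma^2/(2\kappa)$, which are admissible precisely when $\mu > \sigma^2/2$. The scale function $s'(x) \propto x^{-2\mu/\sigma^2} e^{2\kappa x/\sigma^2}$ then shows via Feller's boundary test that both $0$ and $\infty$ are inaccessible, and the speed measure on $\R_{++}$ is finite, so $(Z_t)$ is positive Harris recurrent with $\rho(dx) = g(x)\,dx$ as its unique stationary law. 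The ergodic theorem for one-dimensional diffusions yields the stated strong law for $t^{-1}\int_0^t h(Z_s)\,ds$, and the concrete limit $(\mu - \sigma^2/2)/\kappa$ is just the mean $k\theta$ of a Gamma$(k,\theta)$ variable.

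\textbf{Main obstacle.} The sub- and supercritical regimes fit the classical toolkit (dominance by a geometric Brownian motion on one side; Feller plus ergodic theory on the other). The delicate step is the critical case $\mu = \sigma^2/2$: the process is null recurrent, so Birkhoff does not apply, and the three assertions $\liminf Z_t = 0$, $\limsup Z_t = \infty$, and $\bar Z_t \to 0$ must be wrung simultaneously out of the log-identity and quantitative LIL-type estimates for $W_t$ and for its exponential functional.
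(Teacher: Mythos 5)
Your Steps 1 and 3 are sound and run essentially parallel to the paper: it too reduces existence and positivity to the log-transformed SDE plus the Ikeda--Watanabe comparison principle, and handles the supercritical case with the scale-function/speed-measure machinery (reading the Gamma density off the speed measure rather than off the stationary Fokker--Planck equation, which is a cosmetic difference). Your derivation of $\lim_{t\to\infty}\frac{1}{t}\int_0^t Z_s\,ds=0$ in the critical case from the log-identity is a legitimate alternative to the paper's route, which instead sandwiches the critical process between $0$ and a supercritical process with drift $\sigma^2/2+\epsilon$ and lets $\epsilon\downarrow 0$. (Incidentally, your variation-of-constants computation for $Y=1/Z$ forces the constant $\kappa$, not $\mu/\kappa$, in the denominator; the formula as printed in the proposition is a typo.) One small circularity to repair in Step 1: you cannot invoke finiteness and positivity of ``the explicit formula'' to conclude global existence before you have shown that formula is the solution; argue in the paper's order --- local existence up to explosion by It\^o's theorem, non-explosion to $+\infty$ from $\log Z_t\le \log Z_0+(\mu-\sigma^2/2)t+\sigma W_t$, and non-explosion to $0$ from the boundedness and Lipschitz continuity of $r\mapsto \mu-\kappa e^r$ on $(-\infty,0]$ --- and only then verify the closed form.

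The genuine gap is the assertion $\limsup_{t\to\infty}Z_t=\infty$ in the critical case. The proposed mechanism --- ``along sufficiently tall new maxima of $W_t$ the numerator swamps the integral'' --- does not work as stated: writing $A_t=\int_0^t e^{\sigma W_s}\,ds$ and $M_t=\max_{s\le t}W_s$, at a time $t$ of a running maximum the only easy bound is $A_t\le t\,e^{\sigma M_t}$, which yields $Z_t\ge Z_0e^{\sigma M_t}/(1+\kappa Z_0\,t\,e^{\sigma M_t})\sim 1/(\kappa t)$, a quantity tending to $0$; nothing forces $e^{\sigma W_t}/A_t$ to be large at such times. To make that ratio large you need times at which $W$ has just completed a steep ascent from much lower values, and producing infinitely many of them requires a genuine excursion or quantitative LIL argument that you only gesture at in your ``main obstacle'' paragraph. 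The paper sidesteps this entirely: when $\mu=\sigma^2/2$ the scale function $s(x)=\int_c^x (y/a)^{-1}e^{2\kappa(y-a)/\sigma^2}\,dy$ maps $\R_{++}$ onto all of $(-\infty,\infty)$ while the speed measure has infinite total mass, so $Z$ is null recurrent, and recurrence of a regular diffusion on $\R_{++}$ with inaccessible boundaries immediately gives both $\liminf_{t\to\infty} Z_t=0$ and $\limsup_{t\to\infty} Z_t=\infty$. Since you already deploy exactly this scale/speed apparatus in Step 3, the clean fix is to apply it in the critical case rather than fighting the explicit formula.
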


Next, we consider a population living in a spatially heterogeneous environment with $n$ different patches. These patches may represent distinct habitats, patches of the same habitat type, or  combinations thereof. The abundance of the population in the $i$-th patch at time $t \ge 0$ is $\bar X_t^i$.
Let $(\bar X_t^i)_{t \ge 0}$ be given by
\begin{equation}
\label{eq:base}
d\bar X_t^i = \bar X_t^i\left(\mu_i    -\kappa_i \bar X_t^i \right) \, dt + \bar X_t^i \, dE_t^i,
\end{equation}
where $\mu_i$ is the intrinsic rate of growth the population in patch $i$
in the absence of stochasticity, $\kappa_i$ is the strength of
intraspecific competition in patch $i$, and $E^i_t=\sum_j \gamma_{ji} B^j_t$
for a standard multivariate Brownian motion $(B^1,\dots,B^n)^T$
on $\R^n$ and an $n \times n$ matrix $\Gamma :=(\gamma_{ij})$.
The infinitesimal covariance matrix for the non-standard
Brownian motion
$(E_t^1,\dots, E_t^n)$ is  $\Sigma=(\sigma_{ij}):=\Gamma^T \Gamma$.

The populations in the various patches described by equation \eqref{eq:base}
are coupled only by the spatial correlations present in the driving Brownian motion $(E_t^1,\dots, E_t^n)$.
We further couple the population dynamics across patches by assuming the fraction of population in patch $i$ equals $\alpha_i$ for all time. This spatial distribution can be realized at the scale of the individual when, as described in greater detail in Remark~\ref{R:prelimiting}, individuals disperse rapidly and independently of one another in such a manner  that the fraction of time spent in patch $i$  equals  $\alpha_i$ for each individual. Under this assumption, we call $\alpha=(\alpha_1,\alpha_2,\dots,\alpha_n)$, with $\alpha_i\ge 0$ for all $1\le i\le n$ and $\sum_{i=1}^n \alpha_i=1$,  a \textbf{patch-selection strategy}. Continuing to denote the abundance of the population in the $i$-th patch at time $t \ge 0$ as $\bar X_t^i$, we have $\bar X_t^i =\alpha_i \bar X_t$, where  $\bar X_t=\sum_{i=1}^n \bar X_t^i$ is the total population abundance at time $t \ge 0$.  If we impose these constraints  on
$(\bar X^1 \ldots, \bar X^n)$, then it is heuristically reasonable that
the process $\bar X$ is an autonomous Markov process that satisfies the SDE
\begin{equation}\label{eq:pre}
d\bar X_t = \bar X_t \sum_{i=1}^n \alpha_i \left(\mu_i - \kappa_i \alpha_i \bar X_t\right) \, dt  +\bar X_t \sum_{i=1}^n \alpha_i  \, d E^i_t.
\end{equation}

\begin{remark}\label{R:prelimiting}
One way to justify the formulation of  \eqref{eq:pre} rigorously
is to first modify \eqref{eq:base} to obtain a system of SDEs explicitly accounting for dispersal. Suppose that individuals disperse from patch $i$ to patch $j$ at a rate $\delta d_{ij}$ for some fixed rate matrix $D=(d_{ij})$. As usual, we adopt the convention $d_{ii}=-\sum_{j\neq i} d_{ij}$.
The resulting system of SDEs is
\begin{equation}\label{eq:high_dispersion}
d\tilde X_t^i = \tilde X_t^i \left(\mu_i    -  \kappa_i \tilde X_t^i \right) \, dt
+ \delta \sum_{j } \tilde X_t^j d_{ji} \, dt
+ \tilde X_t^i dE_t^i.
\end{equation}
Assume that the rate matrix $D$ has a unique stationary distribution $\alpha$; that is, $\alpha_j>0$ for $1 \le j \le n$, $\sum_{i=1}^n \alpha_i =1$,
\begin{equation}\label{e:alpha}
\sum_{j=1}^n \alpha_j d_{ji} = 0
\end{equation}
for $1 \le i \le n$.
In this case, a vector $(y^1,\dots,y^n)$ satisfies
\begin{equation}\label{e:y}
\sum_{j=1}^ny^jd_{ji} =0
\end{equation}
for $1 \le i \le n$ if and only if

\begin{equation}\label{e:y2}
y^j=\alpha_j c
\end{equation}
for $1 \le j \le n$ for some constant $c$.  Moreover, summing \eqref{e:y2} we find that
\begin{equation}\label{e:cy}
c=\sum_{i=1}^n y^i.
\end{equation}
Note that by \eqref{e:alpha} we can write the drift term in \eqref{eq:high_dispersion} that contains $\delta$ as
\begin{equation}\label{e:drift}
 \delta \sum_{j } \tilde X_t^j d_{ji} \, dt = \delta \sum_j (\tilde X_t^j-\alpha_j\tilde X_t)d_{ji}\,dt
\end{equation}
where $\tilde X_t :=\sum_{i=1}^n \tilde X_t^i$.
using \eqref{e:y2} and \eqref{e:cy}, we see that
 $(x^1,\dots,x^n)$ and $x:=\sum_{i=1}^n x^i$ are such that
\begin{equation}\label{e:iff}
\sum_{j=1}^n (x^j-\alpha_j x)d_{ji} =0
\end{equation}
for $i=1,\dots,n$ if and only if
\begin{equation}\label{e:cond}
\begin{split}
x^j-\alpha_j x &= \alpha_j \sum_{i=1}^n (x^i-\alpha_i x)\\
&= 0
\end{split}
\end{equation}
for $1 \le j \le n$.

It follows from \eqref{e:drift} and the equivalence between \eqref{e:iff} and \eqref{e:cond} that as  $\delta$ increases the solution of \eqref{eq:high_dispersion}
experiences an increasingly strong drift
towards the one-dimensional subspace
\begin{equation*}
\{(x_1, \ldots, x_n) : x_i = \alpha_i (x_1 + \cdots + x_n), \, i=1,\ldots,n\}.
\end{equation*}
In the limit  $\delta\rightarrow \infty$, it is plausible that
the system \eqref{eq:high_dispersion} converges to one for which
\begin{equation*}
\tilde X_t^i = \alpha_i \tilde X_t,
\end{equation*}
where $\tilde X_t := \tilde X_t^1 + \cdots + \tilde X_t^n$,
and the total population size $\tilde X_t$ satisfies the autonomous
one-dimensionl SDE
\eqref{eq:pre} with $\bar X_t$ replaced by $\tilde X_t$.
This heuristic for obtaining \eqref{eq:pre} as a high dispersal rate
limit of \eqref{eq:high_dispersion} can be made rigorous by applying
Theorem 6.1 from \cite{Katz91}.
\end{remark}

Let $x\cdot y = \sum_{i=1}^n x_i y_i$ denote the standard Euclidean inner product and define another inner product
$\la \cdot,\cdot\ra_\kappa$ by $\la x,y\ra_\kappa:=\sum_{i=1}^n\kappa_i x_i y_i$.  Since $(\alpha \cdot E_t)_{t \ge 0}$ is a Brownian motion with infinitesimal variance parameter $\alpha\cdot\Sigma \alpha$,
 \eqref{eq:pre} can be expressed more simply as
\begin{equation}\label{eq:single}
d\bar X_t = \bar X_t  \left( \alpha\cdot \mu  - \la \alpha, \alpha\ra_\kappa \bar X_t\right) dt  +\bar X_t \sqrt{\alpha\cdot\Sigma \alpha}\, dW_t,
\end{equation}
where $W_t$ is a standard Brownian motion.

The total population $(\bar X_t)_{t\geq 0}$ defined by \eqref{eq:single} behaves {\bf exactly} like the one-patch case defined by  \eqref{e:Z} with the parameters $\mu \to \mu\cdot\alpha$, $\kappa\to \la \alpha, \alpha\ra_\kappa$ and $\sigma\to \sqrt{\alpha\cdot\Sigma \alpha}$. In particular, $(\bar X_t)_{t\geq 0}$ is a diffusion process and we have the following immediate consequence of Proposition~\ref{P:Z}

\begin{proposition}\label{P:one_population_stationary}
Consider the diffusion process $(\bar X)_{t\geq 0}$ given by
\eqref{eq:single}.
\begin{itemize}
\item
If $\bar X_0 = x > 0$, then $\bar X_t > 0$ for all $t \ge 0$ almost surely.
\item
If $\alpha \cdot \mu - \frac{\alpha \cdot \Sigma \alpha}{2}<0$,
then $\lim_{t\to\infty} \bar X_t=0$ almost surely.
\item
If $\alpha \cdot \mu - \frac{\alpha \cdot \Sigma \alpha}{2}=0$,
then $\liminf_{t\to\infty} \bar X_t=0$ almost surely,
$\limsup_{t\to\infty} \bar X_t=\infty$ almost surely,
and
$\lim_{t\rightarrow \infty}\frac{1}{t}\int_0^t \bar X_s\,ds = 0$ almost surely.
\item
If $\alpha \cdot \mu - \frac{\alpha \cdot  \Sigma \alpha}{2} >0$, then
the process $(\bar X_t)_{t\geq 0}$
has a unique stationary distribution $\rho_{\bar X}$ on
$\R_{++}$ with Gamma density
$g(x)= \frac{1}{\Gamma(k)\theta^k}x^{k-1}e^{-\frac{x}{\theta}}$,
where
\[
\theta := \frac{\alpha \cdot \Sigma\alpha}{2  \la \alpha,  \alpha \ra_\kappa}
\mbox{ and }
k := \frac{2\alpha \cdot \mu}{\alpha \cdot \Sigma\alpha}-1.
\]
Moreover,
\[
\lim_{t\to\infty} \frac{1}{t}\int_0^t h(\bar X_s) \, ds = \int_0^\infty h(x)g(x) \, dx \quad \text{almost surely}
\]
for any Borel function $h:\R_{++}\to \R$ with $\int_0^\infty |h(x)|g(x)d\, x<\infty$.
In particular,
\begin{equation*}
\lim_{t\rightarrow \infty}\frac{1}{t}\int_0^t \bar X_s\,ds =
\frac{1}{\la \alpha,\alpha\ra_\kappa} \alpha \cdot \left(\mu - \frac{\Sigma \alpha}{2}\right) \quad \text{almost surely}.
\end{equation*}
\end{itemize}
\end{proposition}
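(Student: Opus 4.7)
The plan is to observe that the SDE \eqref{eq:single} for $\bar X_t$ is formally identical to the one-patch logistic SDE \eqref{e:Z}, with the parameter substitutions $\mu \mapsto \alpha \cdot \mu$, $\kappa \mapsto \la \alpha, \alpha\ra_\kappa$, and $\sigma \mapsto \sqrt{\alpha \cdot \Sigma \alpha}$. Once this identification is in place, every conclusion listed in Proposition~\ref{P:one_population_stationary} follows verbatim from Proposition~\ref{P:Z} applied to $\bar X_t$ with the new parameters.

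The only piece of bookkeeping before quoting Proposition~\ref{P:Z} is to check that $\alpha \cdot E_t = \sum_i \alpha_i E_t^i$ can legitimately be written as $\sqrt{\alpha \cdot \Sigma \alpha}\, W_t$ for some standard one-dimensional Brownian motion $W_t$. This is immediate from Lévy's characterization: $\alpha \cdot E_t$ is a continuous Gaussian martingale starting at $0$ whose quadratic variation is $\alpha \cdot \Sigma \alpha \cdot t$, so dividing by $\sqrt{\alpha \cdot \Sigma \alpha}$ yields a standard Brownian motion (and in the degenerate case $\alpha \cdot \Sigma \alpha = 0$ the diffusion term is identically zero, which is consistent with the $\sigma = 0$ degenerate case of \eqref{e:Z} and is ruled out in the interesting regime anyway). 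With $W_t$ thus defined, \eqref{eq:single} is literally \eqref{e:Z} with the substituted parameters.

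I would then run through the five bullets one at a time. Strict positivity, the $\lim_{t \to \infty} \bar X_t = 0$ conclusion in the subcritical case, and the $\liminf/\limsup$ dichotomy in the critical case are transcribed directly. For the supercritical case, substitution into $\theta = \sigma^2/(2\kappa)$ and $k = 2\mu/\sigma^2 - 1$ gives
\[
\theta = \frac{\alpha \cdot \Sigma \alpha}{2 \la \alpha, \alpha\ra_\kappa}, \qquad k = \frac{2\,\alpha \cdot \mu}{\alpha \cdot \Sigma \alpha} - 1,
\]
exactly as claimed, and the ergodic theorem part of Proposition~\ref{P:Z} transfers the time-average statement for arbitrary integrable $h$. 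The special case $h(x) = x$ gives
\[
\lim_{t \to \infty} \frac{1}{t} \int_0^t \bar X_s \, ds = \frac{1}{\la \alpha,\alpha\ra_\kappa}\!\left(\alpha \cdot \mu - \tfrac{1}{2}\alpha \cdot \Sigma \alpha\right) = \frac{1}{\la \alpha,\alpha\ra_\kappa} \, \alpha \cdot \!\left(\mu - \tfrac{\Sigma \alpha}{2}\right),
\]
matching the stated formula.

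There is no genuine obstacle here; the proof is entirely a reduction. The only thing that deserves a line of justification is the Brownian-motion representation of $\alpha \cdot E_t$, since everything else is then a direct quotation of Proposition~\ref{P:Z}.
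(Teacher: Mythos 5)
Your reduction is exactly the paper's argument: the paper notes that $(\alpha\cdot E_t)_{t\ge 0}$ is a Brownian motion with infinitesimal variance $\alpha\cdot\Sigma\alpha$, rewrites \eqref{eq:pre} as \eqref{eq:single}, and states that $\bar X$ behaves exactly like the one-patch process of \eqref{e:Z} under the substitutions $\mu\to\alpha\cdot\mu$, $\kappa\to\la\alpha,\alpha\ra_\kappa$, $\sigma\to\sqrt{\alpha\cdot\Sigma\alpha}$, so the proposition is an immediate consequence of Proposition~\ref{P:Z}. Your extra line justifying the Brownian representation of $\alpha\cdot E_t$ via L\'evy's characterization is a correct and welcome bit of bookkeeping that the paper leaves implicit.
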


For the dynamics \eqref{eq:base} in patch $i$, Proposition~\ref{P:Z} implies that if there was no coupling between patches
by dispersal, then then population abundance in patch $i$ would converge to $0$ if $\mu_i - \sigma_{ii}/2<0$ and converge to a non-trivial equilibrium if $\mu_i-\sigma_{ii}/2>0$. As noted by \citet{amnat-12} and illustrated below,  the spatially coupled model is such that the population can persist and converge to an equilibrium even when $\mu_i -\sigma_{ii}/2<0$ for all patches.

\subsection*{Persistence of coupled sink populations in symmetric landscapes}
Consider a highly symmetric landscape where
$\mu_i =r$, $\sigma_{ii}=\sigma^2>0$ for all $i$ , $\kappa_i =a $ for all $i$,
 and $\sigma_{ij}=0$ for all $i\neq j$.
 If individuals are equally distributed across the landscape ($\alpha_i=1/n$ for all $i$), then
\[
\mu_i - \frac{\sigma_{ii}}{2}=r -\frac{\sigma^2}{2} \text{ and }\alpha\cdot \mu -\frac{\alpha \cdot \Sigma \alpha}{2}= r - \frac{\sigma^2}{2n}.
\]
The increase in the stochastic growth rate from $r-\sigma^2/2$ for an isolated population to $r-\sigma^2/(2n)$ for the spatially  coupled population stems from individuals spending equal time in patches with uncorrelated environmental fluctuations. Specifically, the environmental variance experienced by individuals distributing their time equally amongst $n$ uncorrelated  patches is $n$ times smaller than the environmental variance experienced by an individual spending their time entirely in one patch. Whenever $\sigma^2>2r>\sigma^2/n$, this reduction in variance allows the entire population to persist despite patches, in and of themselves, not supporting population growth.

\section{Dimorphic model and invasion rates}

To understand the evolution of patch-selection strategies, we now consider competition between populations that only differ in their patch-selection strategy. Let $X_t$ and $Y_t$ be the total population sizes at time $t \ge 0$ of two populations playing the respective patch selection strategies
$\alpha=(\alpha_1,\alpha_2,\dots,\alpha_n)$ and
 $\beta=(\beta_1,\beta_2,\dots,\beta_n)$, so that the densities of the populations in patch $i$ are $\alpha_i X_t$ and $\beta_i Y_t$
 at time $t \ge 0$.
 The dynamics of these two strategies are described
 by the pair of stochastic differential equations
\begin{equation}
\label{dimorphic}
\begin{split}
dX_t &=
X_t \sum_{i=1}^n \alpha_i
\left(\mu_i -\kappa_i(\alpha_i X_t+ \beta_i Y_t)\right) \,dt
+ X_t \sum_{i=1}^n \alpha_i \, dE^i_t\\
dY_t &=  Y_t \sum_{i=1}^n \beta_i
\left(\mu_i  -\kappa_i (\alpha_i X_t + \beta_i Y_t)\right)\,dt
+ Y_t \sum_{i=1}^n \beta_i \, dE^i_t. \\
\end{split}
\end{equation}
Since
\begin{eqnarray*}
d[X,X]_t &=& X_t^2 \alpha\cdot \Sigma \alpha \, dt\\
d[Y,Y]_t &=& Y_t^2 \beta\cdot \Sigma \beta \, dt\\
d[X,Y]_t &=& X_t Y_t \alpha\cdot  \Sigma \beta \, dt,
\end{eqnarray*}
 the diffusion process $((X_t,Y_t))_{t\geq 0}$
 for the spatially coupled, competing strategies  can be represented
 more compactly as
\begin{equation}\label{ex:two}
\begin{aligned}
dX_t &=& X_t\left[\mu\cdot\alpha - \la \alpha,\beta\ra_\kappa Y_t -\la \alpha,\alpha\ra_\kappa X_t \right]\,dt + X_t \sqrt{\alpha\cdot\Sigma\alpha}\,dU_t\\
dY_t &=& Y_t\left[\mu\cdot\beta - \la \alpha,\beta\ra_\kappa X_t -\la \beta,\beta\ra_\kappa Y_t \right]\,dt + Y_t \sqrt{\beta \cdot \Sigma\beta}\,dV_t,
\end{aligned}
\end{equation}
where
$(U,V)$ is a (non-standard) Brownian motion with
covariance structure $d[U,U]_t = dt$, $d[V,V]_t = dt$, and
$d[U,V]_t = \frac{\alpha\cdot  \Sigma \beta}
{ \sqrt{\alpha\cdot \Sigma\alpha}\sqrt{\beta\cdot \Sigma\beta}}\,dt$. Using a construction similar from Remark~\ref{R:prelimiting}, system \eqref{ex:two} can be seen as a high dispersal limit. This system exhibits a degeneracy when $U=V$ i.e.  $ \frac{\alpha\cdot \Sigma \beta} { \sqrt{\alpha\cdot\Sigma\alpha}\sqrt{\beta\cdot\Sigma\beta}}=1$.  If $\Sigma$ is nonsingular, then, by the Cauchy-Schwarz inequality, this degeneracy only occurs if $\alpha = \beta$. We do not consider this possibility in what follows.

To determine whether the two populations coexist or one displaces the other, we introduce the invasion rate $\I(\alpha,\beta)$ of a population playing strategy $\beta$ when introduced at small densities into a resident population playing strategy $\alpha$. As shown in the next proposition, this invasion rate is defined by linearizing the dynamics of $Y$ and computing the long-term population growth rate $\I(\alpha, \beta)$ associated with this linearization.   When $\I(\alpha,\beta)>0$, the population playing strategy $\beta$ tends to increase when rare. When $\I (\alpha,\beta)<0$, the population playing strategy $\beta$ tends to decrease when rare.

\begin{proposition}
\label{P:linearize_invader}
Consider the partially linearized system
\begin{equation}\label{ex:linear}
\begin{aligned}
d \bar X_t &=& \bar X_t\left[\mu\cdot\alpha -\la \alpha,\alpha\ra_\kappa \bar X_t \right]\,dt + \bar X_t \sqrt{\alpha\cdot\Sigma\alpha}\,dU_t\\
d \hat Y_t &=& \hat Y_t\left[\mu\cdot\beta - \la \alpha,\beta\ra_\kappa \bar X_t  \right]\,dt + \hat Y_t \sqrt{\beta \cdot \Sigma\beta}\,dV_t.
\end{aligned}
\end{equation}
Assume  $\bar X_0>0$ and $\hat Y_0>0$.

If $\alpha \cdot( \mu -  \Sigma\alpha/2) > 0$,
so the Markov process $\bar X$ has a stationary distribution
concentrated on $\R_{++}$, then the limit
$\lim_{t\rightarrow\infty} \frac{\log \hat Y_t}{t}$
exists almost surely and is given by
\begin{equation}\label{e_L_M}
\I (\alpha,\beta)
=
\beta\cdot(\mu   - \Sigma\beta/2)
- \frac{\la \alpha,\beta\ra_\kappa }{\la \alpha,\alpha\ra_\kappa} \alpha\cdot\left(\mu - \Sigma \alpha/2\right).
\end{equation}

On the other hand, if $\alpha \cdot( \mu -  \Sigma\alpha/2)  \le 0$, so that
$\lim_{t \to \infty} \frac{1}{t} \int_0^t \bar X_s \, ds = 0$ almost surely,
then the limit
$ \lim_{t\rightarrow\infty} \frac{\log \hat Y_t}{t}$
exists almost surely and is given by
\begin{equation}\label{e_L_M2}
\I (\alpha,\beta)=\beta \cdot (\mu -  \Sigma \beta/2).
\end{equation}
\end{proposition}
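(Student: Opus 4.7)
The plan is to apply It\^o's formula to $\log \hat Y_t$, integrate, divide by $t$, and then invoke Proposition~\ref{P:one_population_stationary} together with the law of large numbers for Brownian motion to identify each limiting term. Because the coefficients of $d\hat Y_t/\hat Y_t$ depend only on $\bar X_t$ (and not on $\hat Y_t$ itself) the equation for $\hat Y$ is a linear SDE driven by $\bar X$, and in particular $\hat Y_t>0$ for all $t$ a.s., so $\log \hat Y_t$ is well-defined.

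First I would apply It\^o's formula to obtain
\begin{equation*}
d\log \hat Y_t
= \Bigl[\mu\cdot\beta - \la\alpha,\beta\ra_\kappa \bar X_t - \tfrac{1}{2}\beta\cdot\Sigma\beta\Bigr]\,dt
+ \sqrt{\beta\cdot\Sigma\beta}\, dV_t,
\end{equation*}
and integrate to get
\begin{equation*}
\frac{\log \hat Y_t - \log \hat Y_0}{t}
= \beta\cdot(\mu - \Sigma\beta/2)
- \la\alpha,\beta\ra_\kappa \cdot \frac{1}{t}\int_0^t \bar X_s\,ds
+ \sqrt{\beta\cdot\Sigma\beta}\, \frac{V_t}{t}.
\end{equation*}
The Brownian term vanishes almost surely as $t\to\infty$ by the standard law of large numbers for Brownian motion ($V_t/t\to 0$ a.s.), so everything reduces to controlling the ergodic average $\frac{1}{t}\int_0^t \bar X_s\,ds$.

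Now I would split according to the sign of $\alpha\cdot(\mu - \Sigma\alpha/2)$, noting that $\bar X$ is exactly the one-patch-style diffusion covered by Proposition~\ref{P:one_population_stationary}. In the subcritical case $\alpha\cdot(\mu-\Sigma\alpha/2)\le 0$, that proposition gives $\frac{1}{t}\int_0^t \bar X_s\,ds \to 0$ a.s., yielding the second formula $\I(\alpha,\beta)=\beta\cdot(\mu-\Sigma\beta/2)$. In the supercritical case $\alpha\cdot(\mu-\Sigma\alpha/2)>0$, the same proposition provides a Gamma stationary distribution $g$ and, since $h(x)=x$ is integrable against $g$, the pathwise ergodic statement gives
\begin{equation*}
\frac{1}{t}\int_0^t \bar X_s\,ds \longrightarrow
\frac{1}{\la\alpha,\alpha\ra_\kappa}\,\alpha\cdot(\mu - \Sigma\alpha/2)\quad \text{a.s.},
\end{equation*}
which upon substitution yields the first formula for $\I(\alpha,\beta)$.

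There is no real obstacle here beyond bookkeeping; the proposition is essentially a corollary of the previously established ergodic theorem applied to the explicit linear log-SDE. The only point that warrants care is checking that the ergodic theorem in Proposition~\ref{P:one_population_stationary} applies to $\bar X$ as defined in \eqref{ex:linear}, which it does verbatim since the equation for $\bar X$ in \eqref{ex:linear} is identical to \eqref{eq:single} (the $\hat Y$-feedback is absent in the partially linearized system by construction), and in verifying the integrability $\int_0^\infty x\,g(x)\,dx<\infty$, which is automatic for the Gamma density.
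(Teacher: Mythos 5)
Your proposal is correct and follows essentially the same route as the paper's proof: apply It\^o's formula to $\log \hat Y_t$, use $V_t/t \to 0$, and invoke Proposition~\ref{P:one_population_stationary} to evaluate $\frac{1}{t}\int_0^t \bar X_s\,ds$ in each of the two cases. The additional points you flag (positivity of $\hat Y$, integrability of $h(x)=x$ against the Gamma density) are sensible bookkeeping that the paper leaves implicit.
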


\begin{proof}
By It\^o's lemma,
\[
d \log \hat Y_t = \left(\mu\cdot \beta-\la \alpha, \beta \ra_\kappa \bar X_t\right)\,dt + \sqrt{\beta\cdot \Sigma\beta}\,dV_t - \frac{1}{2} (\beta\cdot \Sigma\beta) \,dt.
\]

Assume that $\mu\cdot \alpha -  \frac{\alpha \cdot \Sigma \alpha}{2}>0$.  By Proposition \ref{P:one_population_stationary},
\[
\lim_{t\rightarrow \infty}\frac{1}{t}\int_0^t \bar X_s \,ds
 = \frac{\alpha }{2\la \alpha,\alpha\ra_\kappa}\cdot (2\mu - \Sigma \alpha) \quad \text{almost surely}.
\]
Therefore,
\begin{equation*}
\lim_{t\rightarrow\infty}\frac{\log \hat Y_t}{t}=\beta\cdot\mu - \frac{\la \alpha,\beta\ra_\kappa }{2\la \alpha,\alpha\ra_\kappa}\alpha\cdot (2\mu - \Sigma \alpha)
  - \frac{1}{2}\beta\cdot\Sigma\beta \quad \quad \text{almost surely},
\end{equation*}
as claimed.

On the other hand,
assume that $\mu\cdot \alpha -  \frac{\alpha \cdot \Sigma \alpha}{2} \le 0$.
By Proposition \ref{P:one_population_stationary},
\[
\lim_{t\rightarrow \infty}\frac{1}{t}\int_0^t \bar X_s \,ds
 = 0 \quad \text{almost surely}.
\]
Therefore,
\begin{equation*}
\lim_{t\rightarrow\infty}\frac{\log \hat Y_t}{t}=\mu\cdot \beta - \frac{1}{2}\beta\cdot\Sigma\beta \quad \text{almost surely},
\end{equation*}
again as claimed.
\end{proof}

In the next proposition, we show that if a population playing strategy $\beta$ cannot invade a population playing strategy $\alpha$ (i.e. $\I (\alpha,\beta)<0$), then the population strategy $\alpha$ can invade the population playing strategy $\beta$ (i.e. $\I(\beta,\alpha)>0$). This suggests, as we will show in the next section, that such a strategy $\alpha$ should exclude strategy $\beta$.

\begin{proposition}\label{P:Lyapunov_exponents}
Suppose that $\alpha \cdot (\mu -  \Sigma\alpha/2) > 0$
and $\I(\alpha,\beta)<0$.  Then, $\I(\beta,\alpha)> 0$.
\end{proposition}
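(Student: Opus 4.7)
The plan is to split into two cases according to the sign of $\beta \cdot (\mu - \Sigma\beta/2)$, use the explicit formulas for the invasion rates furnished by Proposition~\ref{P:linearize_invader}, and close the computation with the Cauchy--Schwarz inequality for the $\kappa$-weighted inner product $\la \cdot, \cdot \ra_\kappa$.

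In the easy case $\beta \cdot (\mu - \Sigma\beta/2) \le 0$, the autonomous process driven by strategy $\beta$ alone has vanishing time average by Proposition~\ref{P:one_population_stationary}, so formula \eqref{e_L_M2} of Proposition~\ref{P:linearize_invader} (applied with the roles of $\alpha$ and $\beta$ swapped) gives $\I(\beta, \alpha) = \alpha \cdot (\mu - \Sigma\alpha/2) > 0$ directly from the standing hypothesis.

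In the interesting case $\beta \cdot (\mu - \Sigma\beta/2) > 0$, both invasion rates are given by \eqref{e_L_M}. Introduce the shorthand
\[
a := \alpha\cdot(\mu - \Sigma\alpha/2) > 0, \quad b := \beta\cdot(\mu - \Sigma\beta/2) > 0,
\]
together with $c := \la \alpha, \beta\ra_\kappa \ge 0$, $A := \la \alpha, \alpha\ra_\kappa > 0$, and $B := \la \beta, \beta\ra_\kappa > 0$. Then $\I(\alpha,\beta) = b - (c/A)a$ and $\I(\beta,\alpha) = a - (c/B)b$. The hypothesis $\I(\alpha, \beta) < 0$ rearranges to the strict inequality $bA < ca$, which in particular forces $c > 0$. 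Multiplying this by $c/(AB) > 0$ yields $cb/B < c^2 a/(AB)$, and Cauchy--Schwarz for the inner product $\la \cdot, \cdot \ra_\kappa$ supplies $c^2 \le AB$, so $cb/B < a$ and therefore $\I(\beta,\alpha) = a - (c/B) b > 0$.

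The one place that might look like a potential obstacle is the risk that Cauchy--Schwarz fails to be strict (since $\alpha$ and $\beta$ can be proportional as vectors in $\R^n$ only if they are equal, given that both sum to $1$). However, as the argument above shows, the strictness in $bA < ca$ already provides the needed slack, so only the weak form $c^2 \le AB$ is used. Consequently no separate hypothesis on the $\alpha$--$\beta$ configuration beyond $\alpha \ne \beta$ is required.
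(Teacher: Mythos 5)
Your proof is correct and takes essentially the same route as the paper's: the same case split on the sign of $\beta\cdot(\mu-\Sigma\beta/2)$, the same invasion-rate formulas from Proposition~\ref{P:linearize_invader}, and the Cauchy--Schwarz inequality for $\la\cdot,\cdot\ra_\kappa$ to close the positive case. The only cosmetic difference is that you argue directly (rearranging $\I(\alpha,\beta)<0$ and multiplying through) where the paper phrases the second case as a proof by contradiction.
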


\begin{proof}
Set $A:=\alpha \cdot( \mu -  \Sigma\alpha /2)$
and $B := \beta \cdot( \mu -  \Sigma\beta /2)$.
Assume that $A > 0$ and $\I(\alpha,\beta)<0$.
To show that $\I (\beta,\alpha)>0$,
we consider two cases, $B  \le 0$ and $B>0$. Suppose $B \le 0$.
Then, $\I(\beta,\alpha)= A > 0$ by Proposition~\ref{P:linearize_invader}
and by assumption.

Alternatively, suppose that $B >0$. Then
\[
\I(\alpha,\beta)
=
B - A \frac{\la \alpha, \beta\ra_\kappa}{\la \alpha, \alpha\ra_\kappa}
\text{  and }
\I(\beta,\alpha)
=
A - B \frac{\la \alpha, \beta\ra_\kappa}{\la \beta, \beta\ra_\kappa}
\]
by Proposition~\ref{P:linearize_invader}.
Assume, contrary to our claim, that $\I(\alpha,\beta) <0$ and $\I(\beta,\alpha) \leq 0$.
From the Cauchy-Schwarz inequality
$\la x,y \ra_\kappa \leq \la x,x \ra_\kappa^{1/2} \la y,y \ra_\kappa^{1/2}$ we get
\[
\begin{split}
& B \la \alpha, \alpha\ra_\kappa ^{1/2} \la \beta, \beta \ra_\kappa ^{1/2}
\geq B \la \alpha, \beta \ra_\kappa \geq A\la \beta, \beta\ra_\kappa\\
& A \la \alpha, \alpha\ra_\kappa ^{1/2} \la \beta, \beta \ra_\kappa ^{1/2}
\geq A \la \alpha, \beta \ra_\kappa > B\la \alpha,\alpha \ra_\kappa . \\
\end{split}
\]
The above inequalities yield the contradiction
$B \la \alpha, \alpha\ra_\kappa ^{1/2}\geq A \la \beta,\beta\ra _\kappa^{1/2}$
and  $B \la \alpha, \alpha\ra_\kappa ^{1/2}< A \la \beta, \beta\ra_\kappa ^{1/2}$.
\end{proof}

An immediate consequence of Proposition~\ref{P:linearize_invader} is the following corollary. This corollary implies that if a population playing strategy $\beta$ can invade a population playing strategy $\alpha$ and a population playing strategy $\alpha$ can invade a population playing strategy $\beta$, then a single population playing strategy $\alpha$ converges to
a non-trivial equilibrium and the same is true of a single population playing strategy $\beta$.
This suggests, as we show in the next section,  that under these conditions these two strategies should coexist.

\begin{corollary}\label{C:Lyapunov_exponents_positive}
The invasion rate satisfies $\I(\alpha,\beta)\le \beta\cdot(\mu - \Sigma\beta/2)$. In particular, if $\I(\alpha,\beta)>0$ and $\I(\beta,\alpha)>0$,
then $\alpha \cdot( \mu -  \Sigma\alpha /2) > 0$
and $\beta \cdot( \mu -  \Sigma\beta /2) > 0$.
\end{corollary}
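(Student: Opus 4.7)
The plan is to read off the inequality directly from the two cases of Proposition~\ref{P:linearize_invader} and then obtain the second statement by symmetry.

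First I would dispose of the easy case $\alpha \cdot (\mu - \Sigma\alpha/2) \le 0$: Proposition~\ref{P:linearize_invader} gives $\I(\alpha,\beta) = \beta \cdot (\mu - \Sigma\beta/2)$, so the desired inequality holds with equality. In the remaining case $\alpha \cdot (\mu - \Sigma\alpha/2) > 0$ the proposition gives
\[
\I(\alpha,\beta) = \beta\cdot(\mu - \Sigma\beta/2) - \frac{\la \alpha,\beta\ra_\kappa}{\la \alpha,\alpha\ra_\kappa}\,\alpha\cdot(\mu - \Sigma\alpha/2).
\]
Since $\alpha$ and $\beta$ are patch-selection strategies (so their entries are nonnegative) and the weights $\kappa_i$ are strictly positive, $\la \alpha,\beta\ra_\kappa = \sum_i \kappa_i \alpha_i \beta_i \ge 0$, and $\la \alpha,\alpha\ra_\kappa > 0$. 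Hence the subtracted term is nonnegative, which yields $\I(\alpha,\beta) \le \beta\cdot(\mu - \Sigma\beta/2)$ as claimed.

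For the ``in particular'' clause, suppose $\I(\alpha,\beta) > 0$ and $\I(\beta,\alpha) > 0$. Applying the just-proved inequality gives $\beta\cdot(\mu - \Sigma\beta/2) \ge \I(\alpha,\beta) > 0$; the same inequality with the roles of $\alpha$ and $\beta$ interchanged gives $\alpha\cdot(\mu - \Sigma\alpha/2) \ge \I(\beta,\alpha) > 0$. This finishes the proof. There is no genuine obstacle here — the only thing to check carefully is the sign of $\la \alpha, \beta\ra_\kappa$, which uses that patch-selection strategies have nonnegative coordinates and competition coefficients are positive.
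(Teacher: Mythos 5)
Your proof is correct and is exactly the argument the paper intends: the paper states the corollary as an ``immediate consequence'' of Proposition~\ref{P:linearize_invader} without writing out details, and the intended reasoning is precisely your case split on the sign of $\alpha\cdot(\mu-\Sigma\alpha/2)$ together with the nonnegativity of $\la\alpha,\beta\ra_\kappa$. Nothing further is needed.
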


\section{Exclusion and protected polymorphisms}

Our main results about the dimorphic process $(X,Y)$ is that the invasion rates determine the long-term fate of competing strategies. If the invasion rates predict that strategy $\beta$ cannot invade a population playing strategy $\alpha$, then the population playing strategy $\alpha$ drives the population playing strategy $\beta$ asymptotically to extinction as shown in the following theorem. We give a proof in Appendix B.

\begin{theorem}\label{T:stationary_concentration}
If $\alpha\cdot(\mu -  \Sigma\alpha/2) > 0$ and $\I(\alpha,\beta)<0$, then, for $(x,y) \in \R_{++}^2$, the probability measures
\begin{equation*}
\frac{1}{t} \int_0^t \mathbb{P}^{(x,y)}\{(X_s,Y_s) \in \cdot\} \, ds
\end{equation*}
converge weakly as $t \to \infty$ to
$\rho_{\bar X} \otimes \delta_0$, where $\rho_{\bar X}$ is the unique
stationary distribution of $\bar X_t$ concentrated on $\R_{++}$, and $\delta_0$ is the point mass at $0$.
\end{theorem}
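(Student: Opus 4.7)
The plan is to prove the theorem by a Krylov--Bogoliubov argument: show that the Cesaro-averaged distributions $\frac1t\int_0^t \mathbb P^{(x,y)}\{(X_s,Y_s)\in\cdot\}\,ds$ are tight on $[0,\infty)^2$, that every weak-$*$ subsequential limit is an invariant probability measure of the dimorphic diffusion $(X,Y)$, and finally that the only such invariant measure compatible with the hypothesis $\I(\alpha,\beta)<0$ is $\rho_{\bar X}\otimes\delta_0$. For tightness I would couple $X$ synchronously with the monomorphic process $\bar X$ of \eqref{eq:single} (same initial condition and driving Brownian motion $U$) and $Y$ analogously with $\bar Y$. Because $X_t,Y_t\ge 0$ and $\la\alpha,\beta\ra_\kappa\ge 0$, the SDE comparison theorem yields $X_t\le\bar X_t$ and $Y_t\le\bar Y_t$ a.s.; Proposition~\ref{P:one_population_stationary} then supplies the required moment and tightness bounds, and Feller continuity together with Krylov--Bogoliubov ensures invariance of every weak-$*$ subsequential limit.

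The central algebraic step is ruling out any invariant probability measure supported in the interior $\R_{++}^2$. Suppose $\nu$ is such an ergodic invariant measure. It\^o's formula applied to $\log X$ and $\log Y$ gives
\begin{align*}
\log X_t - \log X_0 &= \int_0^t \bigl[\alpha\cdot\mu - \la\alpha,\alpha\ra_\kappa X_s - \la\alpha,\beta\ra_\kappa Y_s - \tfrac12\alpha\cdot\Sigma\alpha\bigr]\,ds + M^X_t,\\
\log Y_t - \log Y_0 &= \int_0^t \bigl[\beta\cdot\mu - \la\alpha,\beta\ra_\kappa X_s - \la\beta,\beta\ra_\kappa Y_s - \tfrac12\beta\cdot\Sigma\beta\bigr]\,ds + M^Y_t,
\end{align*}
where $M^X_t,M^Y_t$ are martingales of deterministic linear quadratic variation. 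Under the stationary ergodic measure $\nu$ the differences $\log X_t-\log X_0$ and $\log Y_t-\log Y_0$ are tight and hence $o(t)$ in probability, while $M^X_t/t,M^Y_t/t\to 0$ a.s.\ by the law of large numbers for continuous martingales. Combined with Birkhoff's theorem (using $X,Y\in L^1(\nu)$, which follows from the pathwise bounds $X\le\bar X$, $Y\le\bar Y$ and the explicit stationary means of $\bar X,\bar Y$), one obtains the linear system
\[
\begin{pmatrix}\la\alpha,\alpha\ra_\kappa & \la\alpha,\beta\ra_\kappa\\ \la\alpha,\beta\ra_\kappa & \la\beta,\beta\ra_\kappa\end{pmatrix}\begin{pmatrix}\E_\nu X\\ \E_\nu Y\end{pmatrix}=\begin{pmatrix}\alpha\cdot(\mu-\Sigma\alpha/2)\\ \beta\cdot(\mu-\Sigma\beta/2)\end{pmatrix}.
\]
The matrix is positive definite by Cauchy--Schwarz (since $\alpha\neq\beta$ forces $\la\alpha,\alpha\ra_\kappa\la\beta,\beta\ra_\kappa>\la\alpha,\beta\ra_\kappa^2$), and solving yields
\[
\E_\nu Y \;=\; \frac{\la\alpha,\alpha\ra_\kappa}{\la\alpha,\alpha\ra_\kappa\la\beta,\beta\ra_\kappa - \la\alpha,\beta\ra_\kappa^2}\,\I(\alpha,\beta),
\]
which is negative by hypothesis, contradicting $\nu(\R_{++}^2)=1$. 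Hence no invariant probability measure lives in the interior.

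Every subsequential limit of the averaged measures must therefore be concentrated on $\partial \R_+^2$; by Proposition~\ref{P:one_population_stationary} applied on each axis the candidates are $\rho_{\bar X}\otimes\delta_0$, possibly $\delta_0\otimes\rho_{\bar Y}$ (if $\beta\cdot(\mu-\Sigma\beta/2)>0$), and $\delta_{(0,0)}$. The step I expect to require most care---the principal obstacle---is to show that, starting from $(x,y)\in\R_{++}^2$, the Cesaro averages place no mass in the limit on $\delta_0\otimes\rho_{\bar Y}$ or on $\delta_{(0,0)}$. Here I would invoke Proposition~\ref{P:Lyapunov_exponents}, which in this regime guarantees $\I(\beta,\alpha)>0$: a stochastic-persistence / Foster--Lyapunov argument exploits this positive transverse Lyapunov exponent at the $y$-axis to produce a repelling drift that prevents the occupation measures from accumulating near $\{x=0\}$, while the same kind of estimate at the origin uses the hypothesis $\alpha\cdot(\mu-\Sigma\alpha/2)>0$. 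Having eliminated every alternative invariant measure, the sole possible weak-$*$ limit is $\rho_{\bar X}\otimes\delta_0$, which is the claimed convergence.
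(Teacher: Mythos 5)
Your proposal is correct in outline and reaches the right conclusion, but the two decisive steps are carried out quite differently from the paper. The paper does not classify invariant measures at all in Appendix B: it first proves the stronger pathwise statement $\lim_{t\to\infty}Y_t=0$ almost surely (Proposition~\ref{P:M extinction}) by applying It\^o's formula to $a\log Y_t-c\log X_t$ with $a=\la\alpha,\alpha\ra_\kappa$, $c=\la\alpha,\beta\ra_\kappa$, so that the $\int_0^t X_s\,ds$ terms cancel, the coefficient $ab-c^2\ge 0$ of $\int_0^t Y_s\,ds$ has a favorable sign by Cauchy--Schwarz, and one reads off $\limsup_t t^{-1}\log Y_t\le\I(\alpha,\beta)<0$; it then prevents mass from collecting at $x=0$ by sandwiching $X$ above an auxiliary logistic diffusion $\check X$ with drift $\mu\cdot\alpha-c\epsilon-a\check X$ on the event $\{Y_t\le\epsilon\}$. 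Your route instead rules out interior invariant measures by the ergodic/stationarity identity $a\,\E_\nu X+c\,\E_\nu Y=\alpha\cdot(\mu-\Sigma\alpha/2)$, $c\,\E_\nu X+b\,\E_\nu Y=\beta\cdot(\mu-\Sigma\beta/2)$, whose solution $\E_\nu Y=a\,\I(\alpha,\beta)/(ab-c^2)<0$ is absurd --- note this is really the same algebra ($aB-cA=a\I(\alpha,\beta)$ plus strict Cauchy--Schwarz) as in Proposition~\ref{P:M extinction}, deployed at the level of stationary expectations rather than pathwise --- and then rules out $\delta_0\otimes\rho_{\bar Y}$ and $\delta_{(0,0)}$ by a persistence argument using $\I(\beta,\alpha)>0$ and $\alpha\cdot(\mu-\Sigma\alpha/2)>0$. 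That is legitimate and is in fact the strategy the paper itself uses for the coexistence theorem in Appendix C, where the needed estimate is made concrete as $\lim_n t_n^{-1}\log X_{t_n}=p_2\,\I(\beta,\alpha)+p_0\,\alpha\cdot(\mu-\Sigma\alpha/2)\le 0$, forcing $p_2=p_0=0$. Two caveats on your version: (i) the repeller step you flag as the principal obstacle must be run on the \emph{random} occupation measures $\frac1t\int_0^t\ind\{(X_s,Y_s)\in\cdot\}\,ds$ (so that Birkhoff and $t^{-1}\log X_t$ make sense along the convergent subsequence) and only afterwards averaged to get the statement about $\frac1t\int_0^t\Pr^{(x,y)}\{\cdot\}\,ds$, and on the non-compact state space it needs the truncation/uniform-integrability argument via $X\le\bar X$ that the paper spells out around its display \eqref{e:ergodic}; (ii) the integrability $\E_\nu X,\E_\nu Y<\infty$ needs a slightly more careful justification than quoting the pathwise domination (e.g.\ compare $\E_\nu[X\wedge K]$ with $\E_{\rho_{\bar X}}[\bar X\wedge K]$ and let $K\to\infty$). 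What the paper's route buys is the stronger, and ecologically meaningful, conclusion that $Y_t\to 0$ almost surely at an exponential rate; what your route buys is a uniform treatment of Theorems~\ref{T:stationary_concentration} and~\ref{T:stationary_positive_Lyapunov} by one invariant-measure classification.
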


On the other hand, if the invasion rates predict that each strategy can invade when rare, then the following theorem proves that the competing strategies coexist: for any initial conditions the joint distribution of $(X_t,Y_t)$ converges as $t\rightarrow \infty$ to a probability distribution on $\R_{++}^2$ with density $\psi$ and, moreover, for any Borel set $B\subset\R_{++}^2$ the long term proportion of times $t$ for which $(X_t,Y_t)$ spends in $B$ converges to
\[
\int_B \psi(x,y) \,dx dy.
\]
A proof is given in Appendix C. In order to appreciate the assumptions of the theorem, it helps to
recall Corollary \ref{C:Lyapunov_exponents_positive} which says that if  $\I(\alpha,\beta)>0$ and $\I(\beta,\alpha)>0$ then $\alpha \cdot( \mu -  \Sigma\alpha /2) > 0$
and $\beta \cdot( \mu -  \Sigma\beta /2) > 0$ so that a single population playing strategy $\alpha$ or $\beta$ will persist.

\begin{theorem}\label{T:stationary_positive_Lyapunov}
Suppose that  $\I(\alpha,\beta)>0$ and $\I(\beta,\alpha)>0$. Then, there exists a unique stationary distribution $\pi$ of $(X,Y)$ on $\R_{++}^2$ that is absolutely continuous with respect to Lebesgue measure.
Moreover, for any bounded, measurable function $f:\R_{++}^2\to \R$
\begin{equation}\label{e_pathwise_limit}
\lim_{t\rightarrow \infty} \frac{1}{t} \int_0^t f(X_s, Y_s) \,ds
=
\int_{\R_{++}^2} f(x,y) \, \pi(dx,dy)
\quad \text{almost surely}.
\end{equation}
Furthermore, the process $(X,Y)$ is strongly ergodic, so that for any initial distribution $q$ one has
\begin{equation}\label{e_ergodic}
\lim_{t\rightarrow \infty}
d_{\mathrm{TV}}(\Pr^q\{(X_t,Y_t) \in \cdot\}, \pi) =0,
\end{equation}
where $d_{\mathrm{TV}}$ is the total variation distance.
\end{theorem}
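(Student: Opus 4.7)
The plan is to combine a Lyapunov-function argument with standard Meyn-Tweedie theory for irreducible Harris-recurrent diffusions. Concretely, I would (i) construct an invariant probability measure $\pi$ on $\R_{++}^2$ via tightness and Krylov-Bogolyubov, (ii) establish absolute continuity and irreducibility via non-degeneracy of the diffusion matrix on the interior, and (iii) deduce uniqueness together with the ergodic and total-variation statements from the Meyn-Tweedie criteria.

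First I would check that $(X,Y)$ is well-defined, non-explosive, and strictly positive, which is immediate from the multiplicative form of \eqref{ex:two} combined with the same comparison argument that proves Proposition~\ref{P:Z}. For tightness at infinity I would use a Lyapunov function of linear type, e.g.\ $V_1(x,y)=x+y$, whose generator gives $\mathcal{L}V_1 \le C_1 - c_1(x^2+y^2)$ outside a large compact, thanks to the quadratic self-limiting terms $-\langle\alpha,\alpha\rangle_\kappa X^2$ and $-\langle\beta,\beta\rangle_\kappa Y^2$. To rule out that occupation measures escape onto the boundary axes, I would add a singular correction using the positive invasion rates: choose constants $a,b>0$ so that the function $V_2(x,y)=-a\log x - b\log y$ satisfies a drift inequality of the form $\mathcal{L}V_2(x,y)\le -\eta + \text{(bounded corrections)}$ on a neighborhood of each axis, where the key input is that the time-averaged drift of $\log X_t$ (respectively $\log Y_t$) at the boundary-supported equilibria $\delta_0\otimes\rho_{\bar Y}$ and $\rho_{\bar X}\otimes\delta_0$ equals $\I(\beta,\alpha)>0$ (resp.\ $\I(\alpha,\beta)>0$) by Proposition~\ref{P:linearize_invader}. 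Combining $V_1$ and $V_2$ gives a Has'minskii-type Lyapunov function on $\R_{++}^2$ whose occupation measures along subsequences converge to an invariant probability $\pi$ with $\pi(\R_{++}^2)=1$.

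Next I would establish absolute continuity and irreducibility. On $\R_{++}^2$ the coefficients of \eqref{ex:two} are smooth, and the infinitesimal covariance matrix
\[
\begin{pmatrix} x^2\,\alpha\cdot\Sigma\alpha & xy\,\alpha\cdot\Sigma\beta \\ xy\,\alpha\cdot\Sigma\beta & y^2\,\beta\cdot\Sigma\beta \end{pmatrix}
\]
has determinant $(xy)^2\bigl[(\alpha\cdot\Sigma\alpha)(\beta\cdot\Sigma\beta)-(\alpha\cdot\Sigma\beta)^2\bigr]$, which is strictly positive on $\R_{++}^2$ because $\Sigma$ is nonsingular and $\alpha\neq\beta$, excluding the Cauchy-Schwarz equality case flagged after \eqref{ex:two}. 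Uniform ellipticity on compact subsets of $\R_{++}^2$ then gives smooth, strictly positive transition densities $p_t((x,y),(x',y'))$ for every $t>0$, so $\pi$ is absolutely continuous with respect to Lebesgue measure and the process is Lebesgue-irreducible on $\R_{++}^2$.

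Finally, I would invoke Meyn-Tweedie: an aperiodic, $\lambda$-irreducible Markov process admitting a Lyapunov function with the drift-to-a-petite-set property is positive Harris recurrent, has a unique invariant probability $\pi$, obeys the strong law \eqref{e_pathwise_limit} for every bounded measurable $f$, and converges in total variation to $\pi$ from every initial distribution, which is \eqref{e_ergodic}. Aperiodicity and the petite-set property for compact subsets of $\R_{++}^2$ follow from the strict positivity of $p_t$ established above. I expect the main obstacle to be step (i), namely designing the Lyapunov function so that the singular part $V_2$ genuinely certifies a uniform drift away from both axes; this requires turning the asymptotic identity in Proposition~\ref{P:linearize_invader} into a pointwise differential inequality, which in practice means integrating against the boundary invariant measures $\rho_{\bar X}$, $\rho_{\bar Y}$ and bootstrapping via a truncation and localization argument on neighborhoods of the axes.
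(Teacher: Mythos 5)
Your outline is workable, but it follows a genuinely different route from the paper. You propose the Khasminskii/Meyn--Tweedie persistence scheme: a linear Lyapunov function $V_1=x+y$ to control mass at infinity, plus a singular function $V_2=-a\log x-b\log y$ whose drift is negative near the axes because the boundary invariant measures $\rho_{\bar X}\otimes\delta_0$, $\delta_0\otimes\rho_{\bar Y}$, $\delta_{(0,0)}$ all have a positive per-capita growth direction ($\I(\alpha,\beta)>0$, $\I(\beta,\alpha)>0$, and $\alpha\cdot(\mu-\Sigma\alpha/2)>0$ via Corollary~\ref{C:Lyapunov_exponents_positive}). The paper instead never builds a boundary Lyapunov function: it gets tightness of the occupation measures for free from the comparison $X_t\le\bar X_t$, $Y_t\le\bar Y_t$ with the decoupled logistic pair, extracts a subsequential weak limit $\nu$ which is stationary on $\R_+^2$ (via a martingale/Echeverr\'{\i}a--Kurtz argument), and then rules out boundary mass by contradiction: if $\nu=p_X\,\rho_{\bar X}\otimes\delta_0+p_Y\,\delta_0\otimes\rho_{\bar Y}+p_0\,\delta_{(0,0)}$, It\^o's formula gives $\lim_n t_n^{-1}\log X_{t_n}=p_Y\,\I(\alpha,\beta)+p_0(\mu\cdot\alpha-\sigma_X^2/2)$, which must be $\le 0$ since $X_t\le\bar X_t$, forcing $p_Y=p_0=0$ and symmetrically $p_X=p_0=0$, a contradiction. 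Both arguments then finish identically, using nondegeneracy of the diffusion on $\R_{++}^2$ (which, as you note, requires $\Sigma$ nonsingular and $\alpha\ne\beta$ --- the standing assumption made after \eqref{ex:two}) to get smooth strictly positive transition densities, Harris recurrence, uniqueness, the pathwise ergodic theorem, and total-variation convergence. What each approach buys: yours, once the drift inequality for $V_2$ is established, typically yields quantitative (e.g.\ geometric) convergence rates and generalizes to higher-dimensional boundary structure; the paper's occupation-measure argument is softer and shorter precisely because in two dimensions the boundary ergodic measures are completely classified, so the linear-in-$(p_X,p_Y,p_0)$ identity does all the work. The one place your proposal is materially incomplete is the step you yourself flag: converting the integrated invasion-rate conditions into a pointwise differential inequality for $V_2$ on a neighborhood of the axes is the technical heart of the Lyapunov route and is only sketched; it is known to work (it is the content of the stochastic persistence literature you are implicitly invoking), but as written it is a promissory note rather than a proof.
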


From the perspective of population genetics, the coexistence of these two strategies corresponds to a \textbf{protected polymorphism}: each strategy (a morph) increases when rare and, therefore, is protected from extinction. This protection from extinction, however, is only ensured over ecological time scales as mutations may result in new morphs that can displace one or both coexisting morphs~\citep{ravigne-etal-04}. The concept of protected polymorphisms was introduced by ~\citet{prout-68} when studying deterministic models of competing haploid populations in a spatially heterogenous with overlapping generations. \citet{turelli-etal-01} extended this concept to stochastic difference equations for competing haploid populations with a constant population size. Theorem~\ref{T:stationary_positive_Lyapunov} provides a mathematically rigorous characterization of protected polymorphisms for our stochastic models with fluctuating population sizes.

Theorem~\ref{T:stationary_positive_Lyapunov} implies that coexistence depends on the intrinsic stochastic growth rate of the populations and the competitive effect of each population on the other. The intrinsic stochastic growth rates are given by
\[
r_\alpha= \alpha\cdot(\mu - \Sigma \alpha/2) \mbox{ and } r_\beta =\beta\cdot (\mu - \Sigma \beta/2).
\]
While the competitive effect of the population with strategy $\alpha$ on the population with strategy $\beta$ is given by the ratio of the magnitude of $\alpha$ projected in the $\beta$ direction (i.e. $\langle \beta/ \|\beta\|_\kappa, \alpha\rangle_\kappa$ where $\|\beta\|_\kappa = \sqrt{\langle \beta,\beta\rangle_\kappa}$) divided by the magnitude of $\beta$ (i.e. $\|\beta \|_\kappa$). Mathematically, the competitive effect of $\alpha$ on $\beta$ and the competitive effect of $\beta$ on $\alpha$ are given by
\[
C_{\alpha,\beta} = \frac{\langle \beta/ \|\beta\|_\kappa, \alpha\rangle_\kappa}{\|\beta\|_\kappa}
\mbox{ and }
C_{\beta,\alpha} = \frac{\langle \alpha/ \|\alpha\|_\kappa, \beta \rangle_\kappa}{\|\alpha\|_\kappa}.
\]
Provided $r_\alpha$ and $r_\beta$ are positive, Theorem~\ref{T:stationary_positive_Lyapunov} implies that there is a protected polymorphism if
\begin{equation}\label{eq:protected}
\frac{r_\alpha}{r_\beta}> C_{\beta,\alpha} \mbox{ and } \frac{r_\beta}{r_\alpha} >C_{\alpha,\beta}.
\end{equation}
In words, the relative intrinsic stochastic growth rate of each population must exceed the competitive effect  on itself due to the other population. Conversely, if one of the inequalities in \eqref{eq:protected} is reversed, then Theorem~\ref{T:stationary_concentration} implies that one population excludes the other. Unlike the standard Lotka-Volterra competition equations, Proposition~\ref{P:Lyapunov_exponents} implies that both inequalities in \eqref{eq:protected} cannot be simultaneously reversed and, consequently, bistable dynamics are impossible.

\subsection*{Environmental stochasticity impedes protected polymorphisms in symmetric landscapes.} Consider a landscape where all patches have the same carrying capacities (e.g. $\kappa_i=1$ for all $i$), the same intrinsic rates of growth (i.e. $\mu_i  =a $ for all $i$), and the same amount of uncorrelated environmental stochasticity (e.g. $\sigma_{ii}=\sigma^2$ for all $i$ and $\sigma_{ij}=0$ for $i\neq j$). Then the protected polymorphism inequalities~\eqref{eq:protected} become
\begin{equation}\label{eq:protected2}
\frac{a-\sigma^2\|\alpha\|^2/2}{a-\sigma^2\|\beta\|^2/2}> C_{\beta,\alpha} \mbox{ and } \frac{a-\sigma^2\|\beta\|^2/2}{a-\sigma^2\|\alpha\|^2/2} >C_{\alpha,\beta}
\end{equation}
where the only $\sigma^2$ dependency is on the left hand sides of both inequalities. As $\frac{a- \sigma^2 \|\beta\|^2/2}{r - \sigma^2\| \alpha\|^2 /2}$ is a decreasing function of $\sigma^2$ whenever $\frac{\|\alpha\|}{\|\beta\|}<1$ and an increasing function of $\sigma^2$ whenever $\frac{\|\alpha\|}{\|\beta\|}>1$, it follows that the set of set of strategies supporting a protected polymorphism
\[
A(\sigma^2)=\{(\alpha,\beta): \eqref{eq:protected2} \mbox{ holds}\}
\]
is a decreasing function of $\sigma^2$ i.e $A(\sigma_2^2)$ is a proper subset of $A(\sigma_1^2)$ whenever $\sigma_2>\sigma_1\ge 0$. Figure~\ref{F:symmetric}A illustrates this conclusion for a two-patch landscape. Intuitively, increasing environmental stochasticity in these symmetric landscapes reduces the stochastic growth rate for all strategies and, thereby, makes it less likely for populations to persist let alone coexist. For asymmetric landscapes, how the set $A(\sigma^2)$ of protected polymorphisms varies with $\sigma^2$ is more subtle, as illustrated in Figure~\ref{F:symmetric}B. In this case, some protected polymorphisms are facilitated by environmental stochasticity, while other protected polymorphisms are disrupted by environmental stochasticity.

\begin{figure}
\begin{center}
\includegraphics[width=3in]{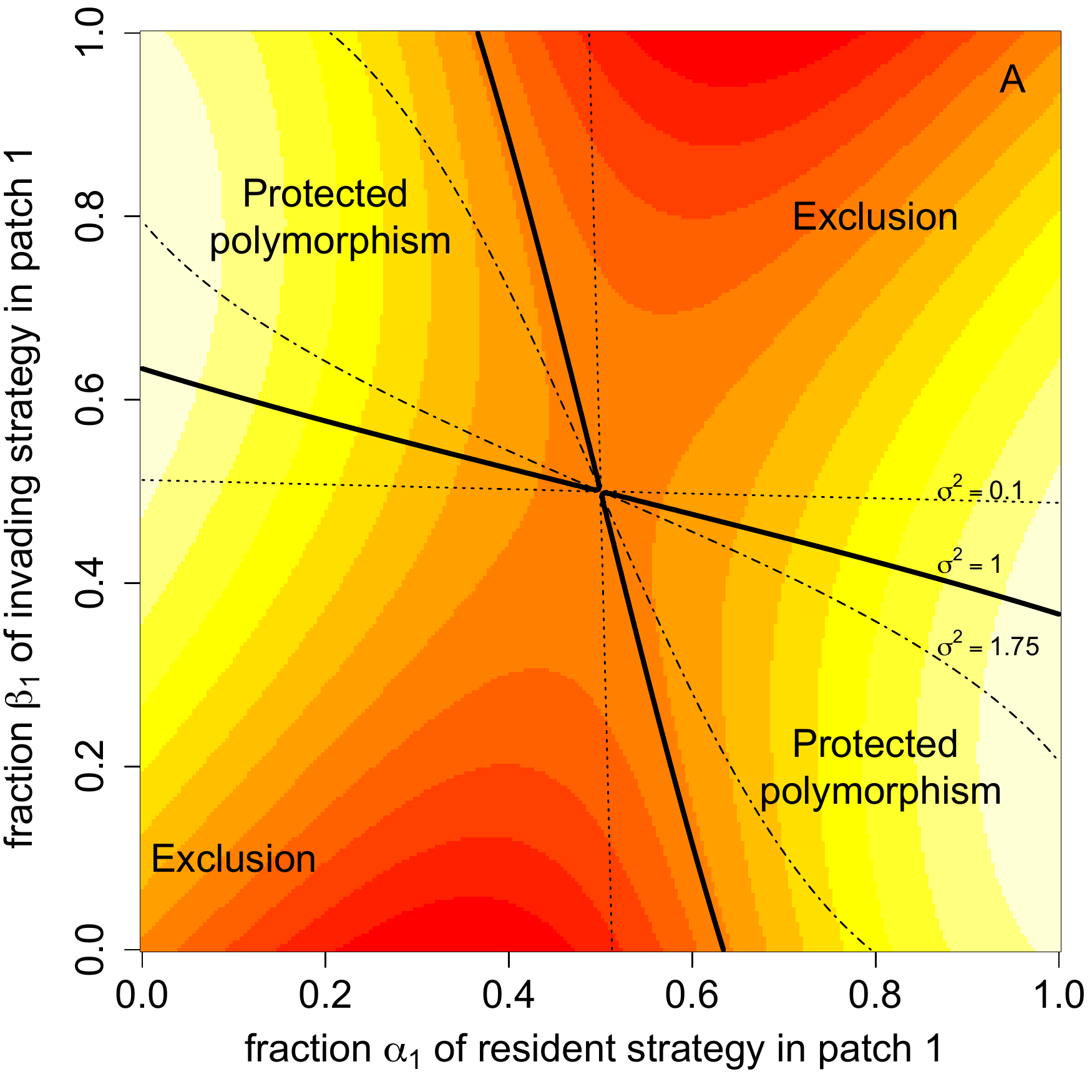}\includegraphics[width=3in]{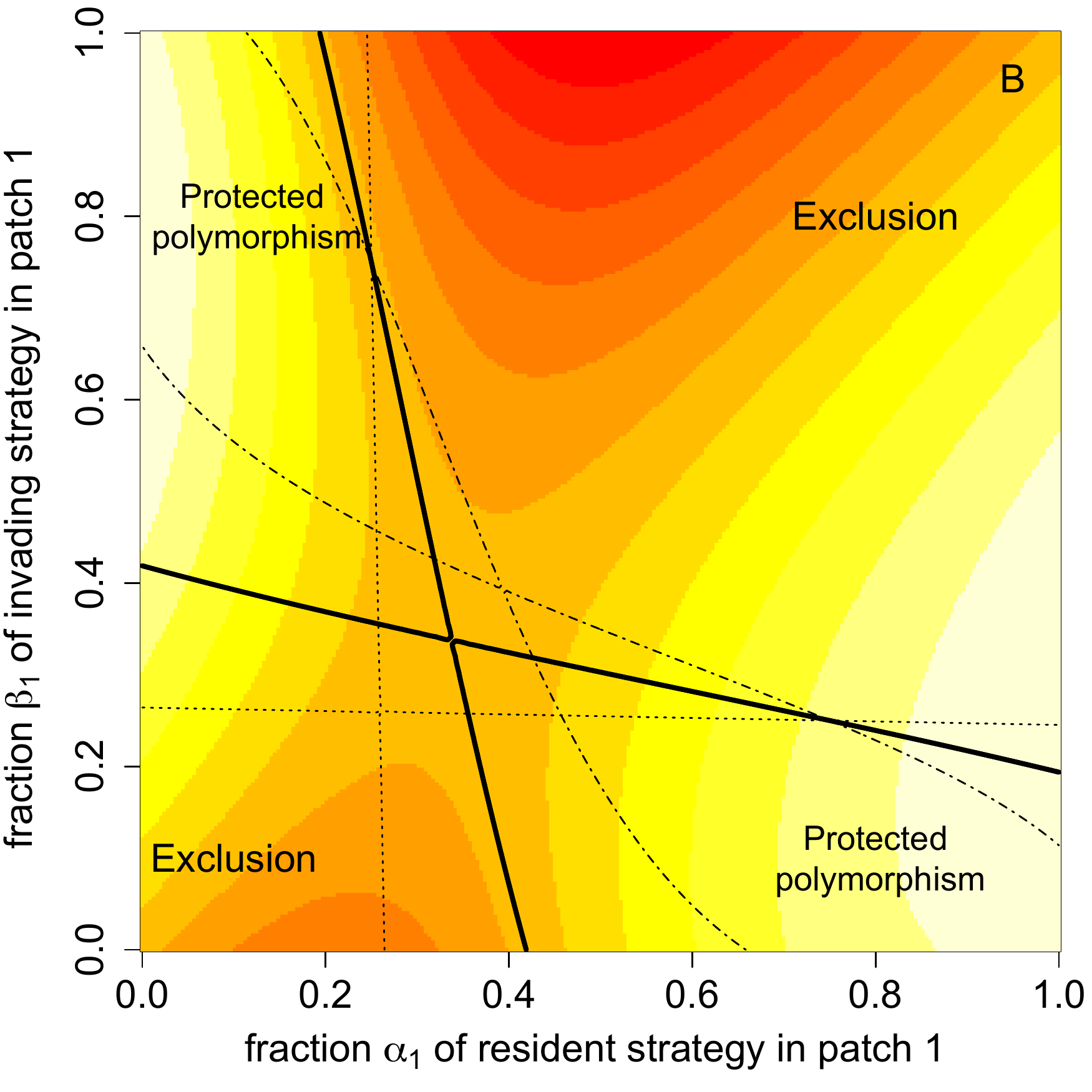}
\end{center}
\caption{Protected polymorphisms and exclusion in two-patch landscapes. Contour plots of $\I(\alpha,\beta)$ where lighter shades correspond to higher values of $\I(\alpha,\beta)$.  The regions where $\I(\alpha,\beta)\I(\beta,\alpha)>0$ are delineated by the solid curves and  correspond to parameter combinations supporting a protected polymorphism. Regions where $\I(\alpha,\beta)\I(\alpha,\beta)<0$ correspond to strategies that cannot coexist. The dashed-dotted and dotted curves indicate how regions of coexistence and exclusion change for higher and lower levels of environmental stochasticity $\sigma^2$, respectively. In panel A, the landscape is spatially homogeneous with $\mu=(1,1)$, $\kappa=(1,1)$ and $\Sigma=\sigma^2 I $ where $I$ is the $2\times 2$ identity matrix. In B, the landscape is spatial heterogeneous with respect to the deterministic carrying capacities $\kappa=(3,1)$ and the remaining parameters as A. }\label{F:symmetric}
\end{figure}

For the symmetric landscapes, we can identify a strategy that displaces all others. Namely, the strategy $\alpha=(\frac{1}{n},\dots,\frac{1}{n})$ of visiting all patches with equal frequency. This strategy maximizes the function function  $\alpha\mapsto a-\sigma^2 \|\alpha \|^2/{2}$. Hence, if we consider a competing strategy $\beta \neq \alpha$, then $\alpha\cdot \beta = \| \alpha\|^2 = \frac{1}{n}$ and
\[
\begin{aligned}
\I(\alpha,\beta) &= a- \sigma^2\|\beta\|^2/2 - \frac{\alpha\cdot \beta}{\|\alpha \|^2} \left( a- \frac{\sigma^2 \|\alpha\|^2}{2} \right)\\
&= a- \sigma^2\|\beta\|^2/2 - \left( a- \frac{\sigma^2 \|\alpha\|^2}{2} \right)<0.
\end{aligned}
\]
e.g. the invasion rates are negative along the vertical transect $\alpha_1=1/2$ in Figure.~\ref{F:symmetric}A. This strategy $\alpha$ is an example of an evolutionarily stable strategy that we discuss further in the next section.

\section{Evolutionarily stable strategies}

The concept of an evolutionary stable strategy was introduced by \citet{maynardsmith-price-73}. Loosely stated, an evolutionary strategy is a strategy that cannot be invaded by any other strategy and, consequently, can be viewed as an evolutionary endpoint. For our models, we say patch selection strategy $\alpha$ is an \emph{evolutionarily stable strategy (ESS)} if $\I(\alpha,\beta)<0$ for all strategies $\beta \neq \alpha$. In light of Theorem~\ref{T:stationary_concentration}, an ESS not only resists invasion attempts by all other strategies, but can displace all other strategies. An ESS $\alpha$ is called a \emph{pure ESS} if $\alpha_i=1$ for some patch $i$, otherwise it is a \emph{mixed ESS}. Our next result provides an algebraic characterization of mixed and pure ESSs. However, it remains to be understood whether these ESSs can be reached by small mutational steps in the strategy space (i.e. are \textbf{convergently stable}~\citep{geritz-etal-97}).

\begin{theorem}\label{T:ESS}
Assume that the covariance matrix $\Sigma$ is positive definite and that there is at least one patch selection strategy which persists in the absence of competition with another strategy; that is, that $\max_\alpha \alpha\cdot( \mu -  \Sigma \alpha/2)>0$.
\begin{description}
\item[Mixed strategy] An ESS $\alpha$ with $\alpha_i>0$ for $i\in I$ with $I\subseteq\{1,2,\dots,n\}$ and $|I|\ge 2$ satisfies
\begin{equation}\label{E:ESS}
-\frac{\alpha \cdot \Sigma \alpha}{2}  = \mu_i -  \kappa_i \alpha_i \frac{\alpha\cdot (2\mu - \Sigma \alpha) }{2\la \alpha,\alpha\ra_\kappa} -\sum_{j=1}^n \sigma_{ij} \alpha_j
\end{equation}
for all $i\in I$. Conversely, if $|I|=n$, then a strategy $\alpha$ satisfying \eqref{E:ESS} is an ESS.
\item [Pure strategy] The strategy $\alpha_i=1$ and $\alpha_j=0$ for $j\neq i$ is an ESS if and only if
\begin{equation}\label{E:ESS2}
\mu_j -\frac{\sigma_{jj}}{2}< -\frac{\sigma_{jj}}{2} + \sigma_{ij} -\frac{\sigma_{ii}}{2}
\end{equation}
for all $j\neq i$.
\end{description}
Furthermore, in the case of $n=2$, there exists a mixed ESS whenever the reversed inequalities
\begin{equation}\label{E:ESS3}
\mu_j -\frac{\sigma_{jj}}{2}> -\frac{\sigma_{jj}}{2} + \sigma_{ij} -\frac{\sigma_{ii}}{2}
\end{equation}
hold for $i=1,2$ and $j\neq i$.
\end{theorem}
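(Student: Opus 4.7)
The plan is to exploit the positive-definiteness of $\Sigma$: since the Hessian of $\beta \mapsto \I(\alpha,\beta)$ with respect to $\beta$ is $-\Sigma$, the invasion rate is a strictly concave quadratic in $\beta$, and the ESS problem reduces to constrained concave maximization on the simplex $\Delta := \{\beta \in \R^n : \beta_j \ge 0,\; \sum_j \beta_j = 1\}$. The key preliminary observation is that any ESS $\alpha$ must satisfy $A(\alpha) := \alpha\cdot(\mu - \Sigma\alpha/2) > 0$: otherwise Proposition~\ref{P:linearize_invader} gives $\I(\alpha,\beta) = \beta\cdot(\mu - \Sigma\beta/2)$ and the standing hypothesis $\max_\alpha \alpha\cdot(\mu - \Sigma\alpha/2) > 0$ exhibits a $\beta \ne \alpha$ with $\I(\alpha,\beta) > 0$. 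In the regime $A(\alpha) > 0$ a direct substitution shows $\I(\alpha,\alpha) = 0$, so $\alpha$ is an ESS if and only if it is the unique maximizer of $\I(\alpha,\cdot)$ over $\Delta$.

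For the mixed statement I would apply the KKT first-order conditions to this concave program: a direct calculation gives $\partial_{\beta_i}\I(\alpha,\beta)|_{\beta=\alpha} = \mu_i - (\Sigma\alpha)_i - \kappa_i\alpha_i A(\alpha)/\la\alpha,\alpha\ra_\kappa$, and summing these against $\alpha_i$ shows that the common Lagrange multiplier on the active set must equal $-\alpha\cdot\Sigma\alpha/2$. Imposing $\partial_{\beta_i}\I = -\alpha\cdot\Sigma\alpha/2$ for $i \in I$ then rearranges into \eqref{E:ESS}, which proves necessity; for the converse when $|I|=n$, the same equation makes $\alpha$ an interior critical point of $\I(\alpha,\cdot)$ restricted to the affine hyperplane $\{\sum_j \beta_j = 1\}$, and strict concavity upgrades this to $\alpha$ being the unique maximizer on $\Delta$, hence an ESS. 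The pure case is the parallel argument at the vertex $\alpha = e_i$: substituting $A(e_i) = \mu_i - \sigma_{ii}/2$ and $\la e_i, e_i \ra_\kappa = \kappa_i$, the strict KKT inequality $\partial_{\beta_j}\I(e_i,\beta)|_{\beta = e_i} < \partial_{\beta_i}\I(e_i,\beta)|_{\beta = e_i}$ for $j \neq i$ reduces algebraically to \eqref{E:ESS2}, and strict concavity again makes this equivalent to $e_i$ being the unique maximizer.

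For the existence of a mixed ESS when $n=2$ under the reversed inequalities \eqref{E:ESS3}, I would apply a best-response fixed-point argument. Strict concavity of $\I(\alpha,\cdot)$ makes the map $T : \Delta \to \Delta$ given by $T(\alpha) := \arg\max_{\beta \in \Delta}\I(\alpha,\beta)$ well-defined and, via Berge's maximum theorem, continuous --- the two formulas in Proposition~\ref{P:linearize_invader} agree at $A(\alpha)=0$, so $\I$ is jointly continuous in $(\alpha,\beta)$. Brouwer's theorem then furnishes a fixed point $\alpha^*$, and the contradiction argument of the first paragraph (again using $\max_\alpha h(\alpha) > 0$ for $h(\beta) := \beta\cdot(\mu - \Sigma\beta/2)$) forces $A(\alpha^*) > 0$, so by strict concavity $\alpha^*$ is an ESS. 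Under \eqref{E:ESS3} the pure-strategy part already rules out $\alpha^* \in \{e_1,e_2\}$ (since \eqref{E:ESS2} fails strictly, neither $e_i$ is a fixed point of $T$), and therefore $\alpha^*$ lies in the interior of $\Delta$ and is the desired mixed ESS. The main technical point to handle carefully is the bookkeeping across the two formulas for $\I(\alpha,\beta)$: verifying continuity of $T$ through the $A(\alpha)=0$ locus and, more importantly, ruling out a Brouwer fixed point sitting in the $A \le 0$ regime where the KKT analysis would fail to correspond to the genuine invasion dynamics; the $\max_\alpha h(\alpha) > 0$ contradiction is what closes this gap.
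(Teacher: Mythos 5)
Your proposal is correct, and for the mixed and pure parts it is essentially the paper's argument: both exploit that $\beta\mapsto\I(\alpha,\beta)$ is a strictly concave quadratic (Hessian $-\Sigma$), reduce the ESS property to first-order Lagrange/KKT conditions on the simplex, identify the multiplier as $-\alpha\cdot\Sigma\alpha/2$ by summing the stationarity equations against $\alpha_i$, and use strict concavity to upgrade the interior critical point to the unique maximizer for the converse. Where you genuinely diverge is the $n=2$ existence claim. The paper defines $g(a)$ as the difference of the two partial derivatives of $\I$ evaluated at $\alpha=\beta=(a,1-a)$, notes that \eqref{E:ESS2} at the two vertices reads $g(0)<0$ and $g(1)>0$, and applies the intermediate value theorem under the reversed inequalities \eqref{E:ESS3} to produce an interior root, which by the converse of the mixed-strategy part is an ESS. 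Your route --- Brouwer's theorem applied to the single-valued continuous best-response map $T(\alpha)=\arg\max_{\beta\in\Delta}\I(\alpha,\beta)$ --- is heavier machinery but buys two things. First, it explicitly confronts the regime $\alpha\cdot(\mu-\Sigma\alpha/2)\le 0$, where $\I(\alpha,\cdot)$ switches to the formula \eqref{e_L_M2}; your observation that a fixed point in that regime would have to maximize $h(\beta)=\beta\cdot(\mu-\Sigma\beta/2)$ and hence contradict $\max h>0$ is exactly the right way to close a gap the paper's IVT argument passes over silently (it uses the $A>0$ formula for $g$ along the whole diagonal without comment). Second, the fixed-point argument proves existence of \emph{some} ESS for arbitrary $n$, with the hypothesis \eqref{E:ESS3} needed only to exclude the two vertices when $n=2$; this bears on the open question about general $n$ raised in the paper's Discussion. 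One shared caveat: for a strictly concave quadratic the vertex $e_i$ is already the unique maximizer when the edge directional derivatives are merely $\le 0$, so the strict inequality in the ``only if'' direction of \eqref{E:ESS2} glosses over the degenerate boundary case in your write-up and in the paper's alike.
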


The first statement of Theorem~\ref{T:ESS} provides a sufficient and necessary condition for a mixed ESS utilizing all patches. For example, in a symmetric landscape (as described in the previous section), this ESS condition is only satisfied for $\alpha=(1/n,1/n,\dots,1/n)$.

The second statement of Theorem~\ref{T:ESS} provides a characterization of when using only a single patch is an ESS. Since the right hand side of equation \eqref{E:ESS2} is negative, using patch $i$ can only be an ESS if all other patches have a negative stochastic rate of growth, $\mu_j-\frac{\sigma_{jj}}{2}<0$ for all $j\neq i$. However, even if only patch $i$ has a positive stochastic growth rate, an ESS may use the other patches, as we illustrate next for two-patch landscapes. \vskip 0.1in

\subsection*{ESSs in two-patch, uncorrelated landscapes}  For an uncorrelated two patches landscape (i.e. $n=2$ and $\sigma_{12}=0$), Theorem~\ref{T:ESS} implies that there is a mixed ESS whenever
\begin{equation}\label{2patch1}
\mu_1 > -\sigma_{22}/2 \text{ and } \mu_2 > -\sigma_{11}/2
\end{equation}
and this ESS satisfies
\begin{equation}\label{2patch2}
\alpha_i = \frac{\mu_i + \alpha \cdot \Sigma\alpha}{\kappa_i (\mu \cdot \alpha - \alpha \cdot \Sigma\alpha/2)/\la \alpha,\alpha \ra_\kappa+\sigma_{ii}}.
\end{equation}
Equation~\eqref{2patch1} implies that even if deterministic growth in patch $2$ is strictly negative (i.e. $\mu_2<0$), then there is selection for movement into this patch provided the variance of the fluctuations in patch $1$ are sufficiently large relative to the intrinsic rate of decline in patch $2$ (Fig.~\ref{F:source-sink}).

\begin{figure}
\includegraphics[width=0.9\textwidth]{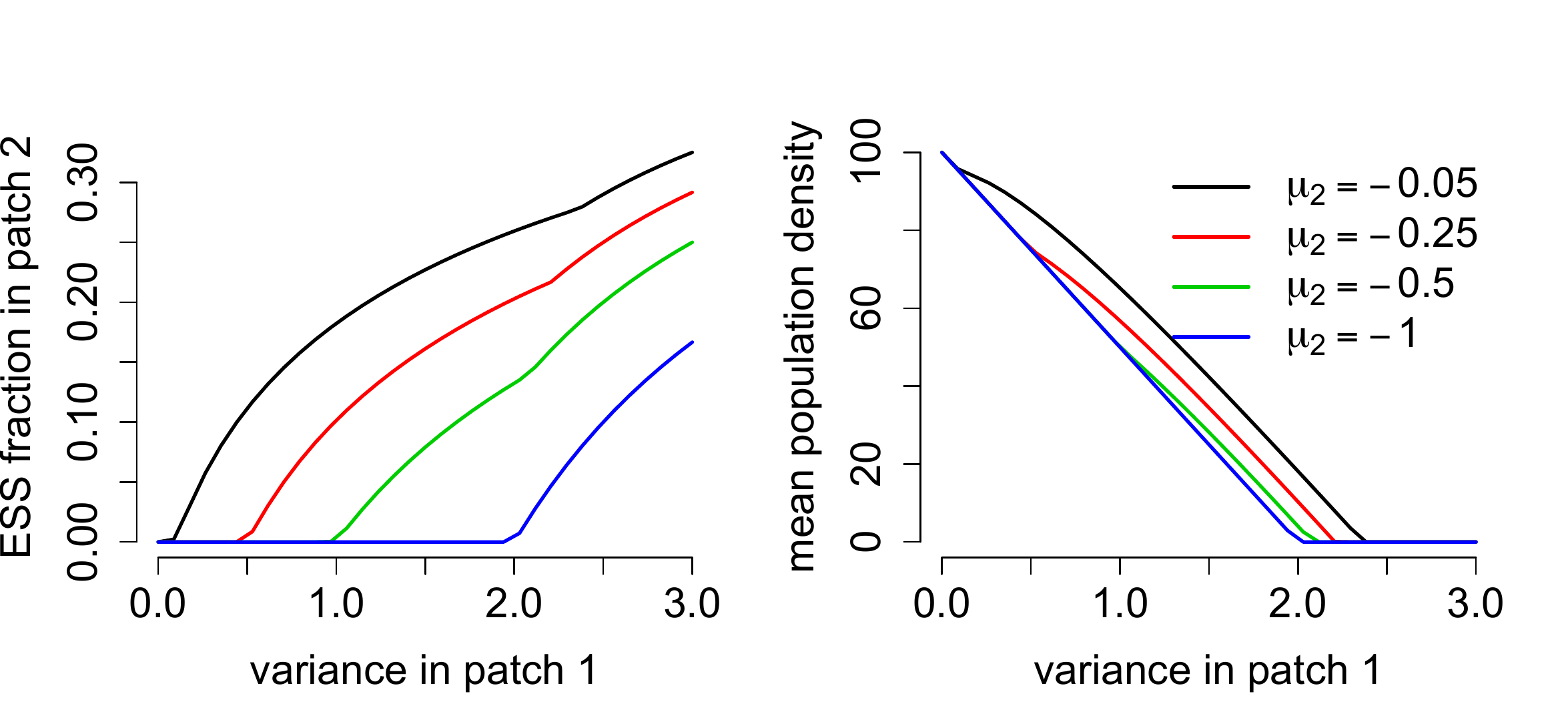}
\caption{ESS for patch selection (left) and mean population abundance (right) in a source-sink landscape.  Parameter values: $n=2$, $\sigma_{11}=\sigma^2$, $\sigma_{22}=\sigma_{12}=0$, $\mu=(1,\mu_2)$, and $\kappa=(1,1)$. }\label{F:source-sink}
\end{figure}

In the limit of no noise (i.e. $\sigma_{ii}\downarrow 0$ for $i=1,2$),  equation~\eqref{2patch2}  becomes
\[
\alpha_i = \frac{\mu_i}{\kappa_i  \mu \cdot \alpha/\la \alpha,\alpha \ra_\kappa}.
\]
While our results do not apply to the deterministic case, this limiting expression for the ESS suggests, correctly, that the ESS for the deterministic model satisfies
\[
\alpha_i = \frac{\mu_i /\kappa_i}{\sum_j \mu_j /\kappa_j} \text{ whenever } \mu_i>0.
\]
In other words, the fraction of individuals selecting patch $i$ is proportional to the equilibrium density $\mu_i/\kappa_i$ supported by patch $i$. Equation~\eqref{2patch2} implies that adding stochasticity in equal amounts to all patches (i.e. $\sigma_{ii}=\sigma^2$ for all $i$) results in an ESS where, relative to the deterministic ESS, fewer individuals select  patches supporting the highest mean population abundance and more individuals selecting patches supporting lower mean population abundances (Fig.~\ref{F:K}).

\begin{figure}
\includegraphics[width=0.9\textwidth]{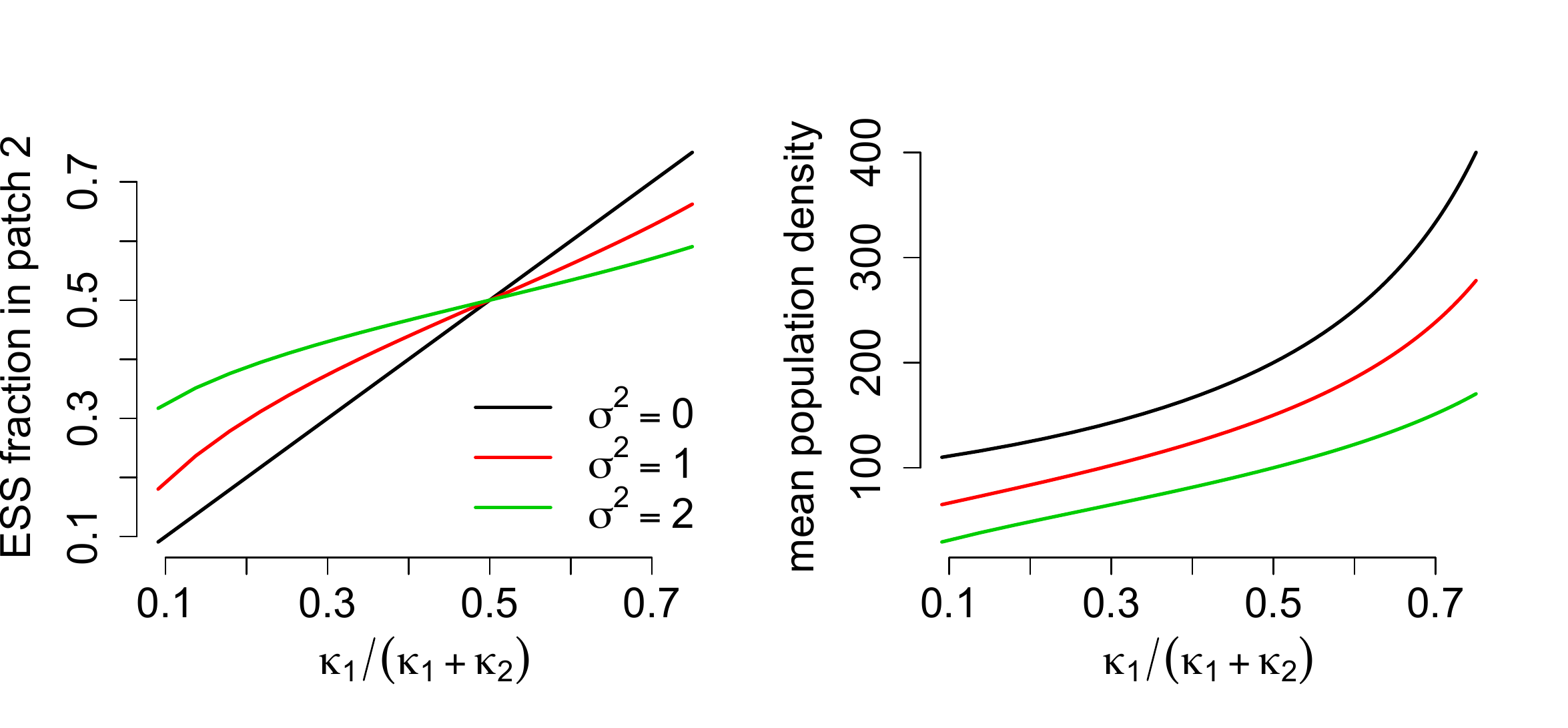}
\caption{The effect of the deterministic carrying capacities  and environmental stochasticity on the ESS for patch selection (left) and mean population abundance (right) in a two-patch landscape. The ratio $\kappa_1/(\kappa_2+\kappa_2)$ corresponds to the ratio of the deterministic carrying capacity ($\mu_2/\kappa_2$) in patch $2$ to the sum of the deterministic carrying capacities ($\mu_1/\kappa_2+\mu_2/\kappa_2$) when $\mu_1=\mu_2=1$.   Parameter values: $n=2$, $\sigma_{11}=\sigma_{22}=\sigma^2$, $\sigma_{12}=0$, $\mu=(1,1)$, and $\kappa=(1,\kappa_2)$. }\label{F:K}
\end{figure}

\section{Discussion}
Habitat selection by organisms is a complex process determined by a mixture of genetic, developmental, ecological, and environmental factors. For ecologists, habitat selection plays a fundamental role in determining the spatial and temporal distribution of a population~\citep{rosenzweig-81,orians-wittenberger-91}. For evolutionary biologists, habitat selection determines the suite of environmental factors driving local adaptation~\citep{edelaar-bolnick-12}. Indeed, in the words of the eminent evolutionary biologist Ernst Mayr, ``With habitat and food selection -- behavioral phenomena -- playing a major role in the shift into new adaptive zones, the importance of behavior in initiating new evolutionary events is self-evident'' \citep[p. 604]{mayr-63}. Here, we examined how spatial and temporal heterogeneity in demographic rates across multiple habitat patches  influence the dynamics of competing populations who only differ in their habitat patch selection preferences. We assume that habitat selection has a genetic basis (e.g.  genes that influence the physiological or neurological capacity of individuals to detect and respond to habitat cues) and that genetic differences in habitat choice have no pleiotropic effects on habitat specific fitness.  Our analysis reveals that, generically, only two outcomes are possible, coexistence or displacement of one population by the other for all initial conditions, and that these outcomes are determined by the invasion rates of populations when rare. In addition to providing a mathematically rigorous justification of prior work, our analysis provides new insights into protected polymorphisms for habitat selection and raises several questions about evolutionary stable strategies  for habitat selection.

Protected polymorphisms correspond to populations of competing genotypes exhibiting negative frequency-dependence: each population tends to increase when rare~\citep{prout-68}. In the case of patch selection, these competing populations differ in the frequencies in which they select habitat patches.  In a survey of the empirical literature, \citet{jaenike-holt-91} found ``that genetic variation for habitat selection is common, especially in arthropods and mollusks, the groups that have been studied most frequently.'' Moreover, they argued that some of this variation may be maintained through protected polymorphism. Specifically, ``in a haploid model without intrinsic fitness differences among genotypes [i.e. soft selection], genetic variation in fixed habitat preferences may be maintained stably'' \citep[pg. S83]{jaenike-holt-91}. We provide a general analytic criterion (see, inequality \eqref{eq:protected}) characterizing these protected polymorphisms for spatially and temporally	variable environments. This criterion depends on the intrinsic fitnesses ($r_\alpha$ and $r_\beta$) of each population and their competitive coefficients ($C_{\alpha,\beta}$ and $C_{\beta,\alpha}$) that characterize the effect of each population on the other. Competitive effects are greatest when there is an overlap in patch use and one population tends to select the patches with the higher carrying capacities more than the other population. Intuitively, by occupying patches with a larger carrying capacities, populations achieve higher regional densities. Coupled with overlap in patch use, these higher densities result in a greater competitive impact of one population on another.  A protected polymorphism occurs when the relative fitness of each population (e.g. $r_\alpha/r_\beta$ for strategy $\alpha$) is greater than the competitive effect of the other population on it (e.g. $r_\alpha/r_\beta>C_{\beta,\alpha}$ for the population playing strategy $\alpha$). Hence, as in the case of species coexistence~\citep{chesson-00}, protected polymorphism are most likely when fitness differences are small (i.e. $r_\alpha/r_\beta \approx 1$) and competitive effects are small (i.e. both $C_{\alpha,\beta}$ and $C_{\beta,\alpha}<1$). Environmental stochasticity solely  effects the intrinsic fitness terms and can facilitate or inhibit protected polymorphisms. For landscapes in which all patches experience the same degree of uncorrelated, temporal variation, environmental stochasticity has an inhibitory effect  as it magnifies fitness differences between competing strategies (e.g. $r_\alpha/r_\beta$ increases with environmental stochasticity). For asymmetric landscapes, however, temporal variability can facilitate polymorphisms by reducing fitness differences of competing strategies.

In contrast to protected polymorphisms, our analysis reveals that populations playing an evolutionarily stable strategy (ESS) for patch selection not only thwart invasion attempts by all other strategies but also can invade and displace a population playing any other strategy. Furthermore, our analysis provides a mathematically rigorous justification of an earlier characterization of ESSs~\citep{amnat-12}. This characterization implies that populations playing the ESS always occupy source habitats (i.e. patches where $\mu_i -\sigma_{ii}^2/2>0$). Indeed, consider a population playing strategy $\alpha$ that does not occupy some source patch, say patch $i$. Then a different behavioral genotype $\beta$ that only selects patch $i$ can invade as $\I(\alpha,\beta)=\mu_i -\sigma_i^2/2>0$. In the limiting case of a deterministic environment, our characterization of the ESS recovers the classic result of \citet{mcpeek-holt-92}: the fraction of time spent in a patch is proportional to the carrying capacity of the patch.  Adding environmental stochasticity generally results in populations playing the ESS decreasing the time spent in the patches with larger carrying capacities and possibly making use of sink patches (i.e. patches where $\mu_i -\sigma^2_i/2<0$). This shift in patch choice can be viewed as a spatial form of  bet hedging: individuals increase fitness by decreasing the variance in their stochastic growth rate at the expense of their mean growth rate~\citep{childs-etal-10}.

We are able to show that for two patch landscapes there always exists an ESS for patch selection. However, several questions remain unanswered. First, what happens for landscapes with more than two patches? Is there always an ESS? Second, while we know that a population playing an ESS can displace a monomoprhic population playing a different strategy, can it displace polymorphic populations? Finally, are ESSs always convergently stable~\citep{geritz-etal-97}? If there are positive answers to this final suite of questions, then ESSs can be generally viewed as the ultimate evolutionary end state for patch selection strategies.

Going beyond the models considered here, studying the evolution habitat use faces many challenges. Our models assume that populations spend a fixed fraction of time in each patch and do so instantaneously. What happens if we relax  these assumptions? For example, if populations are more ideal and able to track changes in population density instantaneously, then we have something closer to the classical notion of ideal free movement~\citep{fretwell-lucas-69}. For these populations, what is the optimal (in an evolutionary sense) density-dependent strategy? Moreover, can such a strategy displace the static strategies considered here? Alternatively, if populations are less ideal and diffusing randomly on the landscape, what happens then? The linear version of this question was tackled in part by \citet{jmb-13}. However, the mathematical analysis for analogous stochastic models with density-dependent feedbacks is largely unexplored. Going beyond single species, the coevolution of patch selection among interacting species has a rich history for spatially heterogeneous, but temporally homogeneous environments~\citep{vanbaalen-sabelis-93,krivan-97,amnat-00,vanbaalen-etal-01,eer-02,cressman-etal-04,prsb-06,cantrell-etal-07}. For example, spatial heterogeneity can select for the evolution of contrary choices in which the prey prefers low quality patches to escape the predator and the predator prefers high quality patches to capture higher quality food items~\citep{fox-eisenbach-92,amnat-00}. Understanding how environmental stochastic influences this coevolution of patch choice and the community level consequences of these coevolutionary outcomes provides a plethora of important, yet largely untouched challenges for future work.

\section*{Appendix A: Proof of Proposition~\ref{P:Z}}
The stochastic differential equation for $Z$
is of the form
\begin{equation}
\label{SDE_reminder}
d Z_t = b(Z_t) \, dt + \sigma(Z_t) \, dW_t,
\end{equation}
where $b(z) :=\mu z - \kappa z^2$ and $\sigma(z) := \sigma z$.
It follows from
It\^o's existence and uniqueness theorem for strong solutions of
stochastic differential equations that this equation has a unique
strong solution up to possibly a finite but strictly positive
explosion time.

Set $R_t := \log Z_t$ for $t \ge 0$.  By It\^o's lemma,
\begin{equation}
\label{log_SDE}
d R_t = \left(\mu - \frac{\sigma^2}{2} - \kappa \exp(R_t)\right) \, dt + \sigma \, dW_t.
\end{equation}
It follows from
the comparison principle of Ikeda and Watanabe (see Chapter VI Theorem 1.1
of \cite{IW89}), Theorem 1.4 of \cite{LeG83},
or Theorem V.43.1 of \cite{RW00}) that
\begin{equation}
\label{E:compare_BM}
R_t \le R_0 + \left(\mu - \frac{\sigma^2}{2}\right)t + \sigma W_t,
\end{equation}
and so $Z$ does not explode to $+\infty$ in finite time.  Moreover, since
$r \mapsto \mu - \kappa e^r$ is a
bounded, uniformly Lipschitz function on $(-\infty,0]$ it follows from
It\^o's existence and uniqueness theorem that $R$ does not explode to $-\infty$
in finite time, so that $Z$ does not hit $0$ in finite time.  We could have
also established this result by using the scale function and speed measure
calculated below to check Feller's necessary and sufficient for
the boundary point of a one-dimensional diffusion to be inaccessible --
see Theorem 23.12 of \cite{K02}.

It is not hard to check using It\^o's lemma that an explicit solution of
the SDE is
\[
Z_t = \frac{Z_0 \exp((\mu-\sigma^2/2)t+\sigma W_t)}{1+Z_0 \frac{\mu}{\kappa}\int_0^t \exp((\mu-\sigma^2/2)s+\sigma W_s) \, ds}.
\]

We see from the inequality \eqref{E:compare_BM} that if
$\mu - \sigma^2/2 < 0$, then $\lim_{t \to \infty} Z_t = 0$ almost surely.

We use the theory based on
the scale function and speed measure of a one-dimensional diffusion
(see, for example, Chapter 23 of \cite{K02} or Sections V.6-7 of \cite{RW00})
below to establish that $Z$ is positive recurrent
with a unique stationary distribution when $\mu - \sigma^2/2 > 0$.  Similar
calculations show that $Z$ is null recurrent when $\mu - \sigma^2/2 = 0$,
and hence $\liminf_{t \to \infty} Z_t = 0$ almost surely and
$\limsup_{t \to \infty} Z_t = \infty$.
It follows from \eqref{log_SDE} and the comparison
principle that if $Z'$ and $Z''$ are two solutions of
\eqref{SDE_reminder} with respective parameters $\mu',\kappa',\sigma'$
and $\mu'',\kappa'',\sigma''$ satisfying $\mu' \le \mu''$, $\kappa'=\kappa''$, $\sigma' = \sigma''$
and the same initial conditions, then $Z_t' \le Z_t''$.
We will show below that
\[
\lim_{t \to \infty} \frac{1}{t} \int_0^t Z_s \, ds
=
\frac{1}{\kappa} \cdot (\mu - \sigma^2/2)
\]
almost surely when $\mu - \sigma^2/2 > 0$, and hence
\[
\lim_{t \to \infty} \frac{1}{t} \int_0^t Z_s \, ds = 0
\]
almost surely when $\mu - \sigma^2/2 = 0$.

We now identify the scale function and speed measure of the one-dimensional
diffusion $Z$.
A choice for the scale function is
\begin{equation}
\label{e_scale}
\begin{split}
s(x)
& = \int_c^x \exp\left(-\int_{a}^y \frac{2b(z)}{\sigma^2(z)}\,dz\right)\,dy \\
& = \int_c^x \left(\frac{y}{a}\right)^{-2\mu/\sigma^2} e^{\frac{2\kappa}{\sigma^2}(y-a)}\,dy \\
\end{split}
\end{equation}
for arbitrary $a,c \in \R_{++}$ (recall that the scale function is only defined
up to affine transformations).
If we set $\tilde \sigma = (\sigma s')\circ s^{-1}$, then
\begin{equation*}
d s(Z_t) = \tilde \sigma(s(Z_t)) \, d\tilde W_t
\end{equation*}
and the diffusion process $s(Z)$ is in natural scale on the state space
$s(\R_{++})$ with speed measure
$m$ that has density $\frac{1}{\tilde \sigma^2}$.

The total mass of the speed measure is
\begin{eqnarray}\label{e_mass_speed_measure}
m(\R_{++})
&=& \int_{s(\R_{++})} \frac{1}{\tilde \sigma^2(x)}\,dx
=  \int_{s(\R_{++})} \frac{1}{((\sigma s')\circ s^{-1})^2(x)}\,dx
= \int_0^\infty \frac{1}{\sigma^2(u)s'(u)}\,du\nonumber\\
&=& \int_0^\infty  \frac{1}{(\sigma u)^2\left(\frac{u}{a}\right)^{-2\mu/\sigma^2} e^{\frac{2\kappa}{\sigma^2}(u-a)}}\,du\nonumber\\
&=& \frac{1}{\sigma^2a^{2\mu/\sigma}} \int_0^\infty u^{\frac{2\mu}{\sigma^2} -2} e^{-\frac{2\kappa}{\sigma^2}(u-a)}\,du.
\end{eqnarray}
By Theorem 23.15 of \cite{K02},
the diffusion process $Z$ has a stationary distribution
concentrated on $\R_{++}$
if and only if the process $s(Z)$ has $(-\infty,+\infty)$
as its state space and the
speed measure has finite total mass
or $s(Z)$ has a finite interval as its
state space and the boundaries are reflecting.
The introduction of an extra negative drift to geometric Brownian motion
cannot make zero a reflecting boundary, so we are interested in conditions
under which
$s(\R_{++}) = (-\infty,\infty)$ and the speed measure has finite total mass.
We see from \eqref{e_scale} and \eqref{e_mass_speed_measure}
that this happens if and only if $\mu-\sigma^2/2 >0$, a condition we assume
holds for the remainder of the proof.

The diffusion $s(Z)$ has a stationary distribution with density
$f:=\frac{1}{m(\R_{++})\tilde \sigma ^2}$ on $s(\R_{++}) = (-\infty,+\infty)$,
and so the stationary distribution of $Z$ is the distribution on
$\R_{++}$ that has density
\begin{eqnarray*}
g(x) &=& f(s(x)) s'(x)\\
&=& \frac{1}{m(\R_{++})\tilde \sigma ^2(s(x))} s'(x)\\
&=& \frac{1}{m(\R_{++})\sigma^2(x)s'(x)}\\
&=& \frac{1}{m(\R_{++})x^2\sigma^2\left(\frac{x}{a}\right)^{-2\mu/\sigma^2} e^{\frac{2\kappa}{\sigma^2}(x-a)}},
\quad x\in \R_{++}.
\end{eqnarray*}
This has the form
of a $\mathrm{Gamma}(k,\theta)$ density with parameters
$\theta :=\frac{\sigma^2}{2\kappa}$ and  $k=\frac{2\mu}{\sigma^2}-1$. Therefore,
\begin{equation*}
g(x)
=
\frac{1}{\Gamma(k)\theta^k}x^{k-1}e^{-\frac{x}{\theta}}
=
\frac{1}{\Gamma\left(\frac{2\mu}{\sigma^2}-1\right)\left(\frac{\sigma^2}{2\kappa}\right)^{\frac{2\mu}{\sigma^2}-1}}x^{\frac{2\mu}{\sigma^2}-2}e^{\frac{-2\kappa x}{\sigma^2}},
\quad x \in \R_{++}.
\end{equation*}

Theorem 20.21 from \cite{K02} implies that the shift-invariant $\sigma$-field is trivial for all starting points.
The ergodic theorem for stationary stochastic processes then tells us that, if we start $Z$ with its stationary distribution,
\begin{equation*}
\lim_{t\rightarrow \infty}\frac{1}{t}\int_0^t  h(Z_s)\,ds  = \int_0^\infty h(x) g(x) \, dx
\end{equation*}
for any Borel function $h:\R_{++}\to \R$ with
$\int_0^\infty |h(x)|g(x) \, dx<\infty$. Since $Z$ has positive continuous transition densities we can conclude that
\begin{equation*}
\lim_{t\rightarrow \infty}\frac{1}{t}\int_0^t  h(Z_s)\,ds  = \int_0^\infty h(x) g(x) \, dx
\end{equation*}
$\Pr^x$-almost surely for any $x\in\R_{++}$.

In particular,
\begin{equation*}
\int_{\R_{++}}x g(x) \, dx
= k \theta
= \frac{1}{\kappa} \cdot \left(\mu - \frac{\sigma^2}{2}\right).
\end{equation*}


\section*{Appendix B: Proof of Theorem~\ref{T:stationary_concentration}}

To simplify our presentation,  we re-write the joint dynamics of $X$ and $Y$ as
\begin{eqnarray}\label{e_MN_simplified}
dX_t &=&X_t \left(\mu\cdot\alpha  - (aX_t + cY_t)\right) \,dt + \sigma_X X_t \,dU_t\\
dY_t &=&Y_t  \left(\mu\cdot\beta  -  (cX_t + bY_t)\right) \,dt + \sigma_Y Y_t \,dV_t,\nonumber
\end{eqnarray}
where $a:=\la\alpha,\alpha\ra_\kappa$, $b :=\la\beta,\beta\ra_\kappa$,
$c:=\la\alpha,\beta\ra_\kappa$, $\sigma_X:=\sqrt{\alpha\cdot \Sigma\alpha}$,
and $\sigma_Y:=\sqrt{\beta\cdot \Sigma\beta}$.

To prove Theorem~\ref{T:stationary_concentration}, we need several preliminary results. First, we prove
existence and uniqueness of solutions to the system \eqref{e_MN_simplified} as well as a useful comparison result in  Theorem~\ref{T:comparison}. Second, in Proposition~\ref{p:positivity}, we establish that $(X_t,Y_t)$ remains in $\mathbb{R}_{++}^2=(0,\infty)^2$ for all $t\ge 0$ whenever $(X_0,Y_0)\in \mathbb{R}_{++}^2$. Third, in Proposition~\ref{P:stationary_Kurtz}, we show that weak limit points of the empirical measures $\frac{1}{t}\int_0^t \mathbb{P}^{(x,y)}\{(X_s,Y_s)\in \cdot \} \, ds$ are stationary distributions for the process $(X,Y)$ thought of as a process on
$\R_+^2$ (rather than $\R_{++}^2$). Finally, we show that $\lim_{t\to\infty} Y_t =0$ with probability one in Proposition~\ref{P:M extinction} and conclude by showing that $\frac{1}{t}\int_0^t  \mathbb{P}^{(x,y)}\{(X_s,Y_s)\in \cdot \} \, ds$ converges weakly to $\rho_{\bar X} \otimes \delta_0$ concentrated on
$\R_{++} \times \{0\}$.

\begin{theorem}\label{T:comparison}
The stochastic differential equation
 in \eqref{e_MN_simplified} has a unique strong solution and $X_t,Y_t\in L^p(\mathbb{P}^{(x,y)})$ for all $t,p>0$ for all $(x,y) \in \mathbb{R}_{++}^2$.
This solution satisfies $X_t > 0$ and $Y_t > 0$ for all $t \ge 0$,
$\mathbb{P}^{(x,y)}$-almost surely for all $(x,y) \in \mathbb{R}_{++}^2$.
 Let $((\bar X_t ,\bar Y_t))_{t \ge 0}$ be the stochastic process defined by
 the pair of stochastic differential equations
\begin{eqnarray}\label{e:linearized}
d\bar X_t &=& \bar X_t \left(\mu\cdot\alpha - a \bar X_t \right) \,dt + \sigma_X \bar X_t \,dU_t\\
d\bar Y_t &=& \bar Y_t \left(\mu\cdot\beta -   b\bar Y_t\right) \,dt + \sigma_Y \bar Y_t \,dV_t\nonumber
\end{eqnarray}
If $(X_0,Y_0)=(\bar X_0,\bar Y_0)$, then
\begin{equation*}
X_t\leq\bar X_t
\end{equation*}
and
\begin{equation*}
Y_t\leq\bar Y_t
\end{equation*}
for all $t\geq 0$.
\end{theorem}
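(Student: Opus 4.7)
The overall strategy is to first establish local existence of a unique strong solution, then obtain positivity and non-explosion by a one-dimensional comparison against the decoupled stochastic logistic processes $\bar X$ and $\bar Y$. Since the drift and diffusion coefficients of \eqref{e_MN_simplified} are locally Lipschitz on $\R^2$, It\^o's standard existence and uniqueness theorem produces a pathwise unique strong solution $(X_t,Y_t)$ defined up to an explosion time $\tau \in (0,\infty]$. The plan is then to show positivity on $[0,\tau)$, use this to invoke a comparison that forces $\tau = \infty$, and finally extract the $L^p$ bounds.

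To establish positivity on $[0,\tau)$, I would localize at the exit times from $[1/n,n]^2$ and apply It\^o's lemma to $\log X_t$ on $\{X_t>0\}$, obtaining
\[
d\log X_t = \bigl(\mu\cdot\alpha - aX_t - cY_t - \tfrac{1}{2}\sigma_X^2\bigr)\,dt + \sigma_X\,dU_t.
\]
Below the explosion time, $X_t$ and $Y_t$ stay bounded on any deterministic subinterval, so the drift above is bounded and $\log X_t$ cannot escape to $-\infty$ in finite time. Letting the localizing radius tend to infinity gives $X_t > 0$ on $[0,\tau)$, and an identical argument for $\log Y_t$ yields $Y_t > 0$ on $[0,\tau)$.

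For the comparison and global existence, I would couple $(X,Y)$ with $(\bar X,\bar Y)$ from \eqref{e:linearized} driven by the \emph{same} pair of Brownian motions $(U,V)$ and with matching initial data. Treating $Y_t(\omega)\ge 0$ as an adapted, progressively measurable parameter in the one-dimensional SDE satisfied by $X$, the drift $x(\mu\cdot\alpha - ax) - c\,x\,Y_t(\omega)$ is pointwise dominated for $x\ge 0$ by the drift $x(\mu\cdot\alpha - ax)$ of $\bar X$, because $c=\la\alpha,\beta\ra_\kappa \ge 0$. The two equations share the globally Lipschitz diffusion coefficient $x\mapsto \sigma_X x$, so the one-dimensional comparison principle of Ikeda--Watanabe (Chapter VI, Theorem 1.1 of \cite{IW89}, already used in Appendix A) delivers $X_t\le \bar X_t$ for $t<\tau$; a symmetric argument gives $Y_t\le \bar Y_t$ for $t<\tau$. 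Since Proposition~\ref{P:Z} (applied coordinatewise to $\bar X$ and $\bar Y$) guarantees non-explosion of both majorants, the pair $(X,Y)$ is confined to $(0,\bar X_t]\times(0,\bar Y_t]$ on every compact time interval, which forces $\tau=\infty$ almost surely and extends the comparison to all $t\ge 0$.

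Finally, for the $L^p$ bounds, a further one-dimensional comparison that replaces the drift $x(\mu\cdot\alpha - ax)$ by the linear drift $(\mu\cdot\alpha)\,x$ yields
\[
\bar X_t \le X_0\,\exp\!\bigl((\mu\cdot\alpha - \tfrac{1}{2}\sigma_X^2)t + \sigma_X U_t\bigr),
\]
a geometric Brownian motion with finite moments of all orders; chaining with $X_t\le \bar X_t$ delivers $X_t\in L^p(\mathbb{P}^{(x,y)})$ for every $p,t>0$, and analogously for $Y_t$. The main subtlety I anticipate is verifying that the Ikeda--Watanabe comparison theorem, usually quoted for drifts that are deterministic in $(t,x)$, extends to the adapted, $\omega$-dependent drift that results from freezing $Y_t(\omega)$ in the equation for $X$. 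This is a standard extension given the Yamada--Watanabe regularity of the shared diffusion coefficient, but it deserves explicit mention in the write-up rather than being left implicit.
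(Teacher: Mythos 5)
Your proposal is correct and follows essentially the same route as the paper: couple $(X,Y)$ with the decoupled logistic pair $(\bar X,\bar Y)$ driven by the same Brownian motions, compare the drifts using $c=\la\alpha,\beta\ra_\kappa\ge 0$ and $Y_t\ge 0$, and then dominate $\bar X$ by a geometric Brownian motion to get the $L^p$ bounds. The only difference is presentational: precisely because the frozen-$Y$ drift is an adapted, $\omega$-dependent perturbation (the subtlety you flag), the paper does not cite the Ikeda--Watanabe theorem as a black box but reruns its proof directly -- showing the local time of $X-\bar X$ at $0$ vanishes, applying Tanaka's formula to $(X_t-\bar X_t)^+$ with a localization at the exit time from $[0,K]$, and closing with Gronwall's lemma.
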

\begin{proof}
The uniqueness and existence of strong solutions is fairly standard,
see, for example, Theorem 2.1 in \cite{LM09}. One notes that the drift
coefficients are locally Lipschitz so strong solutions exist
and are unique up to the explosion time. It is easy to show this explosion time is almost surely infinite (see Theorem 2.1 in \cite{LM09}).
Next, suppose that $X_0 = \bar X_0$.
We adapt the comparison principle of Ikeda and Watanabe  (Chapter VI Theorem 1.1
from \cite{IW89}) proved by the local time techniques of Le Gall (see Theorem 1.4 from \cite{LeG83} and Theorem V.43.1 in \cite{RW00}) to show that
$\bar X_t - X_t \ge 0$ for all $t \ge 0$.

Define $\rho:\R_+\rightarrow \R_+$ by $\rho(x)=|x|^2$. Note that

\[
\begin{split}
& \int_0^t \rho(|\bar X_s-X_s|)^{-1} \ind{\{\bar X_s - X_s>0\} }\, d[\bar X- X]_s \\
& \quad = \int_0^t \rho(|\bar X_s-X_s|)^{-1}
(\sigma_X \bar X_s - \sigma_X X_s)^2
\ind\{\bar X_s - X_s>0\}\,ds\\
&\quad \leq \sigma_X^2 t.\\
\end{split}
\]

Since $\int_{0+}\rho(u)^{-1}\,du=\infty$, by Proposition V.39.3
from \cite{RW00} the local time at 0 of $X-\bar X$ is zero for all
$t\geq 0$.
Put $x^+ := x \vee 0$.
By Tanaka's formula (see equation IV.43.6 in \cite {RW00}),
\begin{eqnarray*}
(X_t-\bar X_t)^+ &=& \int_0^t \ind{\{X_s-\bar X_s>0\}}
(\sigma_X X_s - \sigma_X \bar X_s)\,dU_t\\
&~& + \int_0^t  \ind{ \{X_s-\bar X_s>0\}}
\left[(\mu\cdot\alpha  - (aX_s + cY_s))X_s
-(\mu\cdot\alpha  - a\bar X_s)\bar X_s\right]\,ds.
\end{eqnarray*}
For $K>0$ define the stopping time
\begin{equation*}
T_K:=\inf\{t>0 : X_t\geq K ~\text{or} ~\bar X_t \geq K~\}
\end{equation*}
and the stopped processes $X^K_t=X_{T_K\wedge t}$ and
$\bar X^K_t=\bar X_{T_K\wedge t}$.
Then, stopping the processes at $T_K$ and taking expectations yields
\[
\begin{split}
0
& \leq
\E (X^K_t-\bar X^K_t)^+ \\
& = \E \int_0^{t\wedge T_K}  \ind\{X_s-\bar X_s>0\}
 \left[(\mu\cdot\alpha X_s - X_s(aX_s + cY_s))
 -(\mu\cdot\alpha \bar X_s - a\bar X_s^2 )\right]\,ds\\
& =\E \int_0^{t\wedge T_K}   \ind\{X_s-\bar X_s>0\} \left[ \mu\cdot\alpha (X_s - \bar X_s) -a(X_s^2-\bar X_s^2) -cX_sY_s \right]\,ds\\
& \leq
\E  \int_0^{t\wedge T_K}   \ind\{X_s-\bar X_s>0\} \mu\cdot\alpha (X_s - \bar X_s)\,ds\\
& \leq
\mu\cdot\alpha\, \E\int_0^{t\wedge T_K}  (X_s - \bar X_s)^+\,ds\\
& \leq  \mu\cdot\alpha\, \E\int_0^t (X^K_s - \bar X^K_s)^+\,ds. \\
\end{split}
\]
By Gronwall's Lemma (see, for example, Appendix 5 of \cite{EK05})
$\E [(X^K_t-\bar X^K_t)^+] = 0$ for all $t\geq 0$, so $X^K_t\leq\bar X^K_t$ for all $t\geq 0$. Now let $K\rightarrow \infty$ and recall that $\bar X$ does not explode to get that  $X_t\leq\bar X_t$ for all $t\geq 0$. Since we have shown before that $\bar X$ is dominated by a geometric Brownian motion,
a process that has finite moments of all orders, we get that $X_t,Y_t\in L^p(\mathbb{P}^{(x,y)})$ for all $t,p>0$ and for all $(x,y) \in \mathbb{R}_{++}^2$.
\end{proof}

\begin{remark}
Note that the SDEs for all the processes considered here have unique strong solutions in $L^p$ for all $t\geq 0, p>0$ and for all strictly positive starting points. This follows by arguments similar to those that are in Theorem 2.1 from \cite{LM09} and in Theorem \ref{T:comparison} by noting that our SDEs for $(X,Y)$, $(\bar X, \bar Y)$ etc. are all of the form
\begin{eqnarray*}
d\breve X_t &=& \breve X_t\left[\lambda_1 - \lambda_2 \breve Y_t -\lambda_3 \breve X_t \right]\,dt + \breve X_t \sigma_X\,dU_t\\
d\breve Y_t &=& \breve Y_t\left[\lambda_4 - \lambda_5 \breve X_t -\lambda_6 \breve Y_t \right]\,dt + \breve Y_t \sigma_Y\,dV_t\\
\breve X_0 &=& x\\
\breve Y_0&=& y
\end{eqnarray*}
for $\lambda_1,\dots,\lambda_6\in\R_+$ and $x,y\in\R_{++}$.
\end{remark}

The next proposition tells us that none of our processes hit zero in finite time.

\begin{proposition}\label{p:positivity}
Let $(X,Y)$ be the process given by \eqref{e_MN_simplified}. If $(X_0,Y_0) \in \R_{++}^2$, then
$(X_t,Y_t) \in \R_{++}^2$ for all $t \ge 0$ almost surely.
A similar conclusion holds for all of the other processes we work with.
\end{proposition}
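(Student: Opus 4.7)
The natural route is to show that $\log X_t$ cannot drift to $-\infty$ in finite time, exploiting the already-established upper bounds from Theorem~\ref{T:comparison}.

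First I would localize to sidestep the singularity of $\log$ at $0$: set $\tau_n := \inf\{t \ge 0 : X_t \le 1/n\}$ and apply It\^o's formula to $\log X$ on $[0,\tau_n]$, yielding
\[
\log X_{t\wedge\tau_n} - \log X_0 = \int_0^{t\wedge\tau_n}\!\bigl(\mu\cdot\alpha - aX_s - cY_s - \tfrac12\sigma_X^2\bigr)\,ds + \sigma_X U_{t\wedge\tau_n}.
\]
By Theorem~\ref{T:comparison}, $0 \le X_s \le \bar X_s$ and $0 \le Y_s \le \bar Y_s$ for all $s$ almost surely, where $\bar X,\bar Y$ are the single-species processes of \eqref{e:linearized}. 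These are strong solutions that do not explode (each has the form of the process in Proposition~\ref{P:Z}), so they have almost surely continuous, locally bounded paths. Hence for every $T > 0$ there is an almost surely finite $C(T,\omega)$ with $a\bar X_s + c\bar Y_s \le C(T,\omega)$ on $[0,T]$, and consequently
\[
\inf_{t \le T}\log X_{t\wedge\tau_n} \;\ge\; \log X_0 - \bigl(C(T,\omega) + \tfrac12\sigma_X^2\bigr)\,T + \sigma_X \inf_{t \le T} U_t \;>\; -\infty
\]
almost surely, uniformly in $n$.

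This uniform lower bound forces $\tau_n \nearrow \infty$ almost surely, so $\tau := \inf\{t : X_t = 0\} = +\infty$ almost surely; the same argument with $X$ and $Y$ swapped gives $Y_t > 0$ for all $t$ almost surely. For the ``similar conclusion holds for all of the other processes we work with'' clause, I would observe that each such process ($\bar X$, $\bar Y$, the partially linearized $\hat Y$ of Proposition~\ref{P:linearize_invader}, and so on) solves an SDE of the multiplicative form $dZ_t = Z_t b_t\,dt + \sigma Z_t\,dW_t$ (for an appropriate diffusion coefficient $\sigma$ and driving Brownian motion $W$), in which the drift coefficient $b_t$ is almost surely locally bounded in $t$ --- either intrinsically, or by domination by a comparison process already shown to have continuous paths --- so the identical It\^o-on-$\log$ localization argument transfers verbatim.

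The only delicate point is justifying the It\^o step despite $\log$ being singular at $0$; the localization at $\tau_n$ is the standard device that handles this, after which everything follows from the already-proved comparison principle together with path-continuity of the bounding processes.
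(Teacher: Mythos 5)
Your proof is correct and follows essentially the same route as the paper's: apply It\^o's formula to $\log X$, and observe that the drift integral cannot diverge to $-\infty$ in finite time because $X$ and $Y$ are dominated (via Theorem~\ref{T:comparison}) by non-exploding processes. In fact your version is slightly more careful than the paper's, which applies It\^o to $\log X_t$ directly and asserts the conclusion without spelling out the localization at $\tau_n$ that justifies the use of $\log$ near $0$.
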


\begin{proof}
As an example of the method of proof, we look at the process $(X,Y)$ given by \eqref{e_MN_simplified}.
Taking logarithms and using It\^o's lemma,
\begin{equation*}
d \log X_t
= \left(\mu\cdot\alpha  - (aX_t + cY_t)-\frac{1}{2}\sigma_X^2\right) \,dt
+ \sigma_X  \,dU_t.
\end{equation*}
Therefore,
\begin{equation*}
\log X_t
= \int_0^t\left(\mu\cdot\alpha  - (aX_s + cY_s)-\frac{1}{2}\sigma_X^2\right) \,ds
+ \sigma_X U_t.
\end{equation*}
can't go to $-\infty$
in finite time because $X_t$ and $Y_t$ do not blow up.
\end{proof}

\begin{proposition}\label{P:stationary_Kurtz}
Let $(X,Y)$ be the process given by \eqref{e_MN_simplified} and fix $(x,y) \in \R_{++}^2$.
Any sequence $\{t_n\}_{n \in \N}$ such that $t_n\rightarrow\infty$
has a subsequence $\{u_n\}_{n \in \N}$ such that the sequence of probability measures
\begin{equation*}
 \frac{1}{u_n} \int_0^{u_n} \mathbb{P}^{(x,y)} \{(X_s,Y_s) \in \cdot \} \, ds
\end{equation*}
converges in the topology of weak convergence of probability measures
on $\R_+^2$.  Any such limit is a stationary distribution
for the process $(X,Y)$ thought of as
a process with state space $\R_+^2$.
\end{proposition}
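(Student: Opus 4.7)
I would follow the standard Krylov--Bogolyubov scheme in two steps: establish tightness of the family of empirical occupation measures, then identify any subsequential limit as stationary via a time-shift cancellation.

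\textbf{Tightness.} Write $\eta_t := \frac{1}{t}\int_0^t \mathbb{P}^{(x,y)}\{(X_s,Y_s)\in\cdot\}\,ds$ for $t\ge 1$. By Theorem~\ref{T:comparison}, $0\le X_s\le\bar X_s$ and $0\le Y_s\le\bar Y_s$, where $(\bar X,\bar Y)$ is the decoupled logistic pair in \eqref{e:linearized}. Taking expectations in $d\bar X_s = \bar X_s(\mu\cdot\alpha - a\bar X_s)\,ds + \sigma_X \bar X_s\,dU_s$ and using $\mathbb{E}[\bar X_s^2]\ge\mathbb{E}[\bar X_s]^2$ yields the ODE-type inequality $(d/ds)\mathbb{E}[\bar X_s]\le (\mu\cdot\alpha)\mathbb{E}[\bar X_s]-a\mathbb{E}[\bar X_s]^2$, whence $\sup_s\mathbb{E}^{(x,y)}[\bar X_s]\le\max(x,\mu\cdot\alpha/a)<\infty$; similarly for $\bar Y$. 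Thus $\sup_{t\ge 1}\int(u+v)\,\eta_t(du\,dv)<\infty$, and Markov's inequality gives tightness of $\{\eta_t\}_{t\ge 1}$ on $\R_+^2$. Prohorov's theorem then supplies, for any $t_n\to\infty$, a subsequence $u_n$ and a probability measure $\nu$ on $\R_+^2$ with $\eta_{u_n}\Rightarrow\nu$.

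\textbf{Stationarity.} Let $(P_t)$ denote the transition semigroup of $(X,Y)$ viewed on $\R_+^2$; since the drift and diffusion coefficients of \eqref{e_MN_simplified} vanish on each axis, both axes are invariant and the process there reduces to a one-dimensional logistic SDE, so $(P_t)$ is well-defined on all of $\R_+^2$. For bounded continuous $f:\R_+^2\to\R$ and fixed $t>0$, the telescoping identity
\begin{equation*}
\int P_t f\,d\eta_{u_n}-\int f\,d\eta_{u_n} = \frac{1}{u_n}\left(\int_{u_n}^{u_n+t}P_s f(x,y)\,ds-\int_0^t P_s f(x,y)\,ds\right)
\end{equation*}
is bounded in absolute value by $2t\|f\|_\infty/u_n\to 0$. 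Provided $P_t f$ is bounded and continuous, weak convergence yields $\int P_t f\,d\nu=\int f\,d\nu$ for every such $f$ and every $t>0$, so $\nu$ is stationary for $(P_t)$ on $\R_+^2$.

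\textbf{Main obstacle: Feller property of $(P_t)$ on $\R_+^2$.} This is the delicate step, because a sequence of interior initial points may approach the boundary, and we must check continuity of $P_t f$ across the axes. I would prove it by a coupling/continuous-dependence argument: for $(x_n,y_n)\to(x_0,y_0)$ in $\R_+^2$, run the strong solutions $(X^{(n)},Y^{(n)})$ of \eqref{e_MN_simplified} driven by a common Brownian pair $(U,V)$, with the convention that a zero initial coordinate remains zero (which is the unique strong solution on the axis). Introducing stopping times $T_K^{(n)}=\inf\{s:X_s^{(n)}+Y_s^{(n)}+\bar X_s+\bar Y_s\ge K\}$ to truncate the locally Lipschitz coefficients, a routine Gronwall estimate on $\mathbb{E}\bigl[|X_{s\wedge T_K}^{(n)}-X_{s\wedge T_K}^{(0)}|^2+|Y_{s\wedge T_K}^{(n)}-Y_{s\wedge T_K}^{(0)}|^2\bigr]$ gives convergence in probability on $\{T_K^{(n)}>t\}$, and the uniform moment bounds from Theorem~\ref{T:comparison} show $\mathbb{P}\{T_K^{(n)}\le t\}\to 0$ uniformly in $n$ as $K\to\infty$. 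Hence $(X_t^{(n)},Y_t^{(n)})\Rightarrow(X_t^{(0)},Y_t^{(0)})$ on $\R_+^2$, and bounded convergence gives $P_t f(x_n,y_n)\to P_t f(x_0,y_0)$. This completes the argument.
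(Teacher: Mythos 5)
Your proof is correct, but it takes a genuinely different route from the paper's. The paper disposes of the whole proposition in one stroke by exhibiting the Lyapunov function $\varphi(x,y)=x+y$, computing $\psi := A\varphi$ (which is bounded above on the quadrant and tends to $-\infty$ at infinity), observing that $\varphi(X_t,Y_t)-\int_0^t\psi(X_s,Y_s)\,ds$ is a martingale, and then invoking Theorem 9.9 of Ethier and Kurtz, which delivers both tightness of the Ces\`aro averages and the identification of every weak limit point as a stationary distribution for the martingale problem. You instead run the Krylov--Bogolyubov scheme by hand: your first-moment bound via the comparison with $(\bar X,\bar Y)$ and Jensen's inequality is essentially the same estimate that makes the paper's $\psi$ a valid Lyapunov drift, and your telescoping identity plus the Feller property replaces the martingale-problem machinery for identifying the limit. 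The trade-off is instructive. The paper's argument is shorter and, because stationarity is characterized through the generator rather than the semigroup, it never has to confront continuity of $P_tf$ up to the boundary axes; your argument is self-contained but forces you to prove the Feller property of $(P_t)$ on all of $\R_+^2$, which you correctly flag as the delicate step and handle by a standard localization-plus-Gronwall coupling (legitimate here since the coefficients are polynomial, hence locally Lipschitz, and the uniform moment bounds from Theorem~\ref{T:comparison} control the localization times). One small point of rigor on your side: to differentiate $\E[\bar X_s]$ you should note that the stochastic integral $\int_0^s\sigma_X\bar X_r\,dU_r$ is a true martingale, which follows from the $L^p$ bounds already established; with that remark your tightness step is complete.
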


\begin{proof}
Set $\varphi(x,y) :=x+y$ so that $\varphi \geq 0$ for $x,y > 0$.
Put $\psi(x,y)=\mu\cdot\alpha x + \mu\cdot\beta y  -x(ax+cy)-y(cx+by)$.
Note that $\psi$ is bounded above on the quadrant $x,y \geq 0$ and
$\lim_{\|(x,y)\|\rightarrow\infty}\psi(x,y)=-\infty$ where $\|\cdot\|$ is the Euclidean distance on $\R^2$.
Using It\^o's lemma we get
\begin{eqnarray*}
\varphi(X_t,Y_t)- \int_0^t \psi(X_s,Y_s)\,ds &=& \int_0^t\sigma_Y Y_s\,dV_s
+\int_0^t\sigma_X X_s\,dU_s.
\end{eqnarray*}
Therefore, $\varphi(X_t,Y_t)- \int_0^t \psi(X_s,Y_s)\,ds$ is a martingale.
Applying Theorem 9.9 of \cite{EK05} completes the proof.
\end{proof}

The following result is essentially Theorem 10 in \cite{LWW11}.
We include the proof for completeness.

\begin{proposition}
\label{P:M extinction}
Suppose that $\alpha\cdot \mu - \alpha \cdot \Sigma \alpha/2>0$, $\beta\cdot \mu -\beta \cdot \Sigma \beta/2>0$,  and  $\I(\alpha,\beta)<0$. If $(X,Y)$ is the process given by \eqref{e_MN_simplified}, then $\lim_{t\rightarrow \infty} Y_t = 0$ $\mathbb{P}^{(x,y)}$-a.s.
for all $(x,y) \in \R_{++}^2$.
\end{proposition}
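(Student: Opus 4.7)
The plan is to produce a pathwise upper bound on $\log Y_t$ that grows like $\I(\alpha,\beta)\,t + o(t)$, where the negativity of $\I(\alpha,\beta)$ then forces $Y_t\to 0$ exponentially fast. The key is to find a linear combination of $\log X_t$ and $\log Y_t$ whose drift has the constant piece $\I(\alpha,\beta)$ and no $X_t$-dependence.

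First I would apply It\^o's lemma to obtain
\[
d\log X_t = \left(\mu\cdot\alpha - aX_t - cY_t - \tfrac{1}{2}\sigma_X^2\right)dt + \sigma_X\,dU_t,\qquad
d\log Y_t = \left(\mu\cdot\beta - cX_t - bY_t - \tfrac{1}{2}\sigma_Y^2\right)dt + \sigma_Y\,dV_t,
\]
and set $p := c/a \ge 0$ (nonnegative since $c = \la\alpha,\beta\ra_\kappa \ge 0$). A direct algebraic check shows the coefficient of $X_t$ in $d(\log Y_t - p\log X_t)$ vanishes, and that the constant part of the drift is exactly
$\beta\cdot(\mu - \Sigma\beta/2) - (c/a)\alpha\cdot(\mu - \Sigma\alpha/2) = \I(\alpha,\beta)$, so that
\[
d\bigl(\log Y_t - p\log X_t\bigr) = \Bigl[\I(\alpha,\beta) + \bigl(\tfrac{c^2}{a} - b\bigr)Y_t\Bigr]\,dt + dM_t,
\]
where $M_t := \sigma_Y V_t - p\sigma_X U_t$ is a (possibly non-standard) Brownian motion.

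Second, the Cauchy--Schwarz inequality for $\la\cdot,\cdot\ra_\kappa$ gives $c^2 = \la\alpha,\beta\ra_\kappa^2 \le \la\alpha,\alpha\ra_\kappa \la\beta,\beta\ra_\kappa = ab$, hence $c^2/a - b \le 0$. Since $Y_s > 0$ for all $s$ by Proposition~\ref{p:positivity}, the integrand $(c^2/a - b)Y_s$ is nonpositive. Integrating and discarding this term yields the pathwise bound
\[
\log Y_t - p\log X_t \;\le\; \log Y_0 - p\log X_0 + \I(\alpha,\beta)\,t + M_t.
\]
Dividing by $t$ and using $M_t/t \to 0$ a.s.\ gives $\limsup_{t\to\infty} \frac{\log Y_t - p\log X_t}{t} \le \I(\alpha,\beta)$.

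To finish I need $\log X_t/t \to 0$ a.s., so that the $-p\log X_t$ term does not spoil the bound when reintroduced. By Theorem~\ref{T:comparison}, $X_t \le \bar X_t$, so $p\log X_t \le p\log\bar X_t$. Applying It\^o to the one-dimensional SDE for $\bar X_t$, dividing by $t$, and using Proposition~\ref{P:one_population_stationary} (which yields $\frac{1}{t}\int_0^t \bar X_s\,ds \to \frac{\alpha\cdot(\mu - \Sigma\alpha/2)}{a}$ a.s.\ under the hypothesis $\alpha\cdot(\mu - \Sigma\alpha/2) > 0$) together with $U_t/t \to 0$, one obtains $\log \bar X_t / t \to 0$ a.s. Combining
\[
\frac{\log Y_t}{t} \;\le\; \frac{\log Y_t - p\log X_t}{t} + p\,\frac{\log \bar X_t}{t}
\]
with the two limsup estimates gives $\limsup_{t\to\infty} \log Y_t/t \le \I(\alpha,\beta) < 0$, so $Y_t \to 0$ almost surely (indeed exponentially).

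The main obstacle is the choice $p = c/a$: this is the unique weight that simultaneously eliminates the $X_t$-drift and produces $\I(\alpha,\beta)$ as the constant drift, and it relies on the sign $c^2/a - b \le 0$ supplied by Cauchy--Schwarz to discard the remaining $Y_t$-term. The rest is a matter of verifying that $\log \bar X_t$ grows sublinearly, which is a clean consequence of the ergodic statement already established for $\bar X$.
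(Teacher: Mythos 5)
Your proof is correct and follows essentially the same route as the paper's: the paper forms $a\log(Y_t/Y_0) - c\log(X_t/X_0)$ (your combination scaled by $a$), identifies the constant drift as $a\,\I(\alpha,\beta)$, drops the $-(ab-c^2)\frac{1}{t}\int_0^t Y_s\,ds$ term via Cauchy--Schwarz, and controls $\log X_t/t$ through the comparison $X_t\le \bar X_t$ and the ergodic limit for $\bar X$. No substantive differences.
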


\begin{proof}
Using Ito's lemma and the definition of $\I(\alpha,\beta)$,
\begin{eqnarray*}
a\frac{\log\left(\frac{Y_t}{Y_0}\right)}{t} - c\frac{\log\left(\frac{X_t}{X_0}\right)}{t} &=& a\left(\mu\cdot\beta -\frac{\sigma_Y^2}{2}\right) - c \left(\mu\cdot\alpha -\frac{\sigma_X^2}{2}\right) -(ab-c^2)\frac{\int_0^t Y_s\,ds}{t} \\
&~&+ ~a\sigma_Y \frac{V_t}{t} -c\sigma_X\frac{U_t}{t}\\
&=& a \I(\alpha,\beta) - (ab-c^2)\frac{\int_0^t Y_s\,ds}{t} + a\sigma_Y \frac{V_t}{t} -c\sigma_X\frac{U_t}{t}.
\end{eqnarray*}

By the Cauchy-Schwarz inequality,
$(ab-c^2)=\la\alpha,\alpha\ra _\kappa \la \beta,\beta\ra _\kappa - (\la \alpha,\beta\ra _\kappa)^2
\geq 0$, and so
\begin{eqnarray*}
\frac{\log\left(\frac{Y_t}{Y_0}\right)}{t} &\leq& \frac{c}{a}\frac{\log\left(\frac{X_t}{X_0}\right)}{t} + \I(\alpha,\beta) + \sigma_Y \frac{V_t}{t} - \frac{c}{a}\sigma_X\frac{U_t}{t}.
\end{eqnarray*}

Let $\bar X$ be the process defined by \eqref{e:linearized} with $\bar X_0=X_0$.  Proposition \ref{P:one_population_stationary} implies
\begin{equation}
\label{e_integral}
\lim_{t\rightarrow \infty }\frac{1}{t}\int_0^t \bar X_s\,ds =(\mu\cdot \alpha -\sigma_X^2/2)/a \quad \text{almost surely}.
\end{equation}
It follows from Theorem \ref{T:comparison} that
$X_t\leq \bar X_t$ for all $t\geq 0$. Thus, with probability one,
\begin{eqnarray*}
\limsup_{t\rightarrow\infty}\frac{\log X_t}{t} &\leq& \limsup_{t\rightarrow\infty}\frac{\log \bar X_t}{t} \\
&=& \left(\mu\cdot \alpha - \frac{\sigma_X^2}{2}\right) - a\lim_{t\rightarrow \infty}\frac{1}{t}\int_0^t \bar X_s\,ds + \sigma_X \lim_{t\rightarrow \infty}\frac{U_t}{t}\\
&=& \left(\mu\cdot \alpha - \frac{\sigma_X^2}{2}\right) - a (\mu\cdot \alpha -\sigma_X^2/2)/a\\
&=& 0.
\end{eqnarray*}
Since $U$ and $V$ are Brownian motions,
$\lim_{t\rightarrow \infty} \frac{U_t}{t}
= \lim_{t\rightarrow \infty} \frac{V_t}{t}=0$,
and $\limsup_{t\rightarrow \infty} \frac{\log X_t}{t}\leq 0$ almost surely,
so
\begin{equation*}
\limsup_{t\rightarrow \infty} \frac{\log Y_t}{t} \leq \I(\alpha,\beta) <0 \quad \text{almost surely}.
\end{equation*}
In particular, $\lim_{t \to \infty} Y_t = 0$ almost surely.
\end{proof}

We can now finish the proof of Theorem \ref{T:stationary_concentration}.
Fix $\epsilon>0$ and $\eta>0$ sufficiently small.
Define the stopping time
\begin{equation*}
T_{\epsilon}:=\inf \{t \ge 0 :Y_t\geq \epsilon\}.
\end{equation*}
and the stopped process $X^\epsilon_t:=X_{t\wedge T_{\epsilon}}$.
By Proposition \ref{P:M extinction}, there exists $T>0$ such that
\begin{equation*}
\Pr^{(x,y)}\{Y_t\leq \epsilon ~\text{for all}~ t\geq T\}\geq 1-\eta
\end{equation*}
Define the process $\check X$ via
\begin{equation*}
d\check X_t = \check X_t [(\mu\cdot\alpha-c\epsilon)-a\check X_t]\,dt +  \sigma_X \check X_tdU_t
\end{equation*}
and the stopped process  $\check X^\epsilon_t:=\check X_{t\wedge T_{\epsilon}}$.
Start the process $\check X$ at time $T$ with the condition $\check X_T = X_T$.  We want to show that the process $\check X^\epsilon$ is dominated by the process $X^\epsilon$, that is $X^\epsilon_t\geq \check X^\epsilon_t$ for all $t\geq T$. By the strong Markov property, we can assume $T=0$.

The proof is very similar to the one from Theorem \ref{T:comparison}.
With the notation from the proof of Theorem \ref{T:comparison}, we have
\[
\begin{split}
& \int_0^t \rho(|\check X^\epsilon_s-X^\epsilon_s|)^{-1} \ind\{\check X_s^\epsilon-X_s^\epsilon>0\} \, d[\check X^\epsilon-  X^\epsilon]_s \\
& \quad =\int_0^t \rho(|\check X^\epsilon_s-X^\epsilon_s|)^{-1}
(\sigma_X \check X^\epsilon_s - \sigma_X X^\epsilon_s)^2 \ind\{\check X_s^\epsilon-X_s^\epsilon>0\}]\,ds \\
&\quad \leq \sigma_X^2 t \\
\end{split}
\]
so the local time of the process $\check X^\epsilon-X^\epsilon$ at zero is identically zero.
Then, using Tanaka's formula,
\small
\begin{eqnarray*}
(\check X^\epsilon_t-X^\epsilon_t)^+ &=& \int_0^{t\wedge T_\epsilon} \ind\{\check X_s-X_s>0\}(\sigma_X \check X_s - \sigma_X X_s)\,dU_t\\
&~& + \int_0^{t\wedge T_\epsilon}  \ind\{\check X_s-X_s>0\} \left[((\mu\cdot\alpha-c\epsilon)\check X_s-a\check X_s^2)-(\mu\cdot\alpha X_s -   X_s(cY_s + aX_s))\right]\,ds.
\end{eqnarray*}
\normalsize
Taking expectations,
\begin{eqnarray*}
\E[(\check X^\epsilon_t-X^\epsilon_t)^+] &=& \E\int_0^{t\wedge T_\epsilon}\ind\{\check X_s-X_s>0\}[(\mu\cdot\alpha(\check X_s-X_s)-(c\epsilon\check X_s -cX_sY_s)\\
&~&~ - a(\check X_s^2-X_s^2))\,ds]\\
&\leq& \mu\cdot\alpha \,\E\int_0^{t\wedge T_\epsilon} (\check X_s-X_s)^+\,ds\\
&\leq& \mu\cdot\alpha  \,\E\int_0^t (\check X^\epsilon_s-X^\epsilon_s)^+\,ds.
\end{eqnarray*}

By Gronwall's Lemma,
$\E[(\check X^\epsilon_t-X^\epsilon_t)^+]=0$.
As a result, remembering we assumed $T=0$,
we have $\check X^\epsilon_t\leq X^\epsilon_t$ for all $t\geq T$.
For $\epsilon$ small enough we know that
$\check X$ has a stationary distribution concentrated on $\R_{++}$.
For any sequence $a_n\rightarrow \infty$, if the Cesaro averages
$\frac{1}{a_n} \int_0^{a_n} \mathbb{P}^{(x,y)}\{(X_s,Y_s) \in \cdot\} \, ds$
converge weakly, then the limit is a distribution of the form
$\varphi \otimes \delta_0$, where $\varphi$ is a mixture of
the unique stationary distribution $\rho_{\bar X}$ described in Proposition~\ref{P:one_population_stationary}  and the point mass at $0$.
By the above,
the limit of $\frac{1}{a_n} \int_0^{a_n} \mathbb{P}^{(x,y)}\{(X_s,Y_s) \in \cdot\} \, ds$ cannot have any mass at $(0,0)$ because $\check X_t\leq X_t$ on the event $\{Y_t\leq \epsilon ~\text{for all}~ t\geq T\}$ that has probability $\Pr^{(x,y)}\{Y_t\leq \epsilon ~\text{for all}~ t\geq T\}\geq 1-\eta$. Since $\eta>0$ was arbitrary, we conclude that
$\varphi=\rho_{\bar X} \otimes \delta_0$, as required.

\section*{Appendix C: Proof of Theorem~\ref{T:stationary_positive_Lyapunov}}

Our proof is along the same lines as the proofs of Theorems 4 and 5
in \cite{SBA11}.

We will once again simplify our notation by re-writing the SDE for the
pair $(X,Y)$ as in \eqref{e_MN_simplified}.  We assume throughout this
appendix that the hypotheses of Theorem~\ref{T:stationary_positive_Lyapunov}
hold; that is, $\I(\alpha,\beta) > 0$ and $\I(\beta,\alpha) > 0$.

Let $((\bar X_t ,\bar Y_t))_{t \ge 0}$ be the stochastic process defined by
 the pair of stochastic differential equations in \eqref{e:linearized}
with initial conditions $(\bar X_0,\bar Y_0) = (X_0, Y_0)$.  We know from
Theorem~\ref{T:comparison} that $X_t \le \bar X_t$ and $Y_t \le \bar Y_t$
for all $t \ge 0$.

Note from Corollary~\ref{C:Lyapunov_exponents_positive} that
$\alpha \cdot (\mu - \Sigma \alpha/2) > 0$ and
$\beta \cdot (\mu - \Sigma \beta/2) > 0$ and hence,
by Proposition~\ref{P:one_population_stationary},
the process $(\bar X, \bar Y)$ has a unique stationary distribution
on $\R_{++}^2$ and is strongly ergodic.

Let
\begin{equation*}
\Pi_t(\cdot):= \frac{1}{t} \int_0^t \ind\{(X_s, Y_s)\in\cdot\} \,ds
\end{equation*}
be the normalized occupation measures of $(X,Y)$.
We know that the random probability measures
\begin{equation*}
\bar\Pi_t(\cdot):= \frac{1}{t} \int_0^t \ind\{(\bar X_s, \bar Y_s)\in\cdot\} \,ds
\end{equation*}
converge almost surely and so, in particular, they are tight on $\R_+^2 = [0,\infty)^2$;
that is, for any $\epsilon>0$ we can find a box $[0,K] \times [0,K]$ such that
\begin{equation*}
\frac{1}{t}\int_0^t \ind\{(\bar X_s, \bar Y_s) \in [0,K] \times [0,K]\} \, ds
> 1-\epsilon~\text{ for all}~ t > 0.
\end{equation*}

Therefore,
\begin{eqnarray*}
\frac{1}{t}\int_0^t \ind\{(X_s, Y_s) \in [0,K] \times [0,K]\} \, ds
&\geq& \frac{1}{t}\int_0^t \ind\{(\bar X_s, \bar Y_s) \in [0,K] \times [0,K]\} \, ds \\
&>& 1-\epsilon~\text{ for all}~ t > 0,
\end{eqnarray*}
and hence the normalized occupation measures of $(X,Y) $ are also tight
on $\R_+^2$.
By Prohorov's theorem \cite[Theorem 16.3]{K02},
there exists a
random probability measure $\nu$ on $\R_+^2$
and a
(possibly random) sequence
$(t_n)\subset\R_{++}$ such that $t_n\rightarrow \infty$ for which
\begin{equation}\label{e:occupation_convergence}
\Pi_{t_n} \Longrightarrow \nu
\end{equation}
as $n\rightarrow \infty$ almost surely, where $\Longrightarrow$
denotes weak convergence of probability measures on $\R_+^2$. That is,
with probability one for all bounded and continuous function
$u: \R_+^2 \to \R$
we have
\[
\int_{\R_+} u(x,y) \, \Pi_{t_n}(dx,dy)
\rightarrow
\int_{\R_+} u(x,y) \, \nu(dx,dy)
\]
as $n\rightarrow \infty$.

\begin{proposition}
The probability measure $\nu$ is almost surely a stationary distribution for $(X,Y)$ thought of as a process with state space $\R_+^2$.
\end{proposition}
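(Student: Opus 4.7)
The plan is to combine It\^o's formula with the Echeverria--Weiss characterization of stationarity. Let $L$ denote the infinitesimal generator of the diffusion $(X,Y)$ on $\R_+^2$. For any test function $f \in C_c^2(\R_+^2)$, both $f$ and $Lf$ are bounded and continuous on $\R_+^2$: the polynomial factors appearing in the coefficients of $L$ are neutralized by the compact support of $f$ together with its first and second derivatives.

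By It\^o's formula,
\[
M_t^f := f(X_t, Y_t) - f(X_0, Y_0) - \int_0^t Lf(X_s, Y_s)\,ds
\]
is a local martingale, and the integrand of its quadratic variation is bounded on the support of $\nabla f$, so $[M^f]_t \le C_f t$ and $M^f$ is in fact a true square-integrable martingale. By the strong law for continuous martingales, $M_t^f / t \to 0$ almost surely. Combined with the boundedness of $f$, rearranging gives
\[
\lim_{t\to\infty}\frac{1}{t}\int_0^t Lf(X_s, Y_s)\,ds = 0 \qquad \text{a.s.}
\]

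Evaluating along the (possibly random) subsequence $t_n \to \infty$ for which $\Pi_{t_n}\Longrightarrow\nu$, and using the weak convergence tested against the bounded continuous function $Lf$, I conclude $\int_{\R_+^2} Lf\,d\nu = 0$ almost surely. Taking a countable $C^2$-dense subset $\{f_k\}\subset C_c^2(\R_+^2)$, the identity $\int Lf_k\,d\nu = 0$ holds for every $k$ off a single null event, hence $\int Lf\,d\nu = 0$ for all $f \in C_c^2(\R_+^2)$ by approximation.

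The final step---which I expect to be the main technical obstacle---is upgrading this \emph{infinitesimal} invariance to genuine stationarity of $\nu$ for the transition semigroup of $(X,Y)$ viewed as a Markov process on $\R_+^2$, with the axes absorbing (consistent with the vanishing of the drift and diffusion coefficients on the boundary). This is exactly the content of the Echeverria--Weiss theorem, in the form of Theorem 9.17 of Chapter 4 of \cite{EK05}; the hypotheses---well-posedness of the $L$-martingale problem on $\R_+^2$ and the fact that $C_c^2(\R_+^2)$ is measure-determining and lies in a core of $L$---follow from the smoothness and local Lipschitz property of the coefficients together with the non-explosion established in Theorem~\ref{T:comparison}.
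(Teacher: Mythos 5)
Your argument is correct, but it is not the route the paper takes. The paper works directly with the transition semigroup: it applies the strong law of large numbers to the martingale-difference array $f(Z_{r+(i+1)t})-P_tf(Z_{r+it})$ to get $\frac{1}{u}\int_0^u [f(Z_{s+t})-P_tf(Z_s)]\,ds\to 0$ almost surely, and then passes to the limit along $t_n$ to conclude $\int f\,d\nu=\int P_tf\,d\nu$ for each fixed $t$, with no mention of the generator. You instead establish only the \emph{infinitesimal} identity $\int Lf\,d\nu=0$ for $f\in C_c^2(\R_+^2)$ (via It\^o, the bound $[M^f]_t\le C_ft$, and weak convergence against the compactly supported continuous function $Lf$) and then invoke Echeverria's theorem to upgrade to stationarity. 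Both are sound. Your route has the advantage that the limit is tested only against $Lf\in C_c(\R_+^2)$, so you never need to know that $P_tf$ is continuous; the paper's interchange $\int P_tf\,d\nu_n\to\int P_tf\,d\nu$ silently uses the Feller property. The price you pay is that the entire weight of the argument lands on the hypotheses of Theorem 4.9.17 of Ethier--Kurtz: you must verify the positive maximum principle for $L$ on the \emph{closed} quadrant (which holds because the drift and diffusion coefficients in the $x$-direction vanish on $\{x=0\}$ and likewise for $y$, so a boundary maximum still forces $Lf\le 0$), and you must use well-posedness of the martingale problem on all of $\R_+^2$, boundary initial conditions included, to identify the stationary solution produced by Echeverria with the process $(X,Y)$ itself and hence conclude $\nu P_t=\nu$. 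You flag this step but should state explicitly that Echeverria only delivers \emph{a} stationary solution with marginal $\nu$, and that uniqueness for the martingale problem is what converts this into stationarity of $\nu$ for the given semigroup. With that made explicit, your proof is complete and, if anything, slightly more self-contained on the weak-convergence side than the paper's.
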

\begin{proof}
Let $(P_t)_{t\geq 0}$ be the semigroup of the process $(X,Y)$ thought of
as a process on $\R_+^2$. For simplicity let us write $Z_t:=(X_t,Y_t)$ for all $t\geq 0$ and $\nu_n:=\Pi_{t_n}$.

By the Strong Law of Large Numbers for martingales, we have that for all $r\in\R_+$ and all bounded measurable functions $f$
\begin{equation*}
\lim_{k\rightarrow \infty} \frac{1}{k} \sum_{i=0}^{k-1} [f(Z_{r+(i+1)t}) - P_tf(Z_{r+it})] =0 \quad \text{almost surely}.
\end{equation*}
As a result,
\begin{eqnarray*}
\frac{1}{k}\left(\int_t^{kt}f(Z_s)\,ds - \int_0^{(k-1)t}P_t f(Z_s)\,ds \right)&=& \frac{1}{k}\sum_{i=0}^{k-1} \int_0^t [f(Z_{r+(i+1)t}) -P_tf(Z_{r+it})]\,dr\\
&\rightarrow& 0 ~\text{as}~k\rightarrow \infty \quad \text{almost surely}.
\end{eqnarray*}
This implies that
\begin{eqnarray*}
\lim_{u\rightarrow \infty}\frac{1}{u} \int_0^u [f(Z_{s+t})- P_t(Z_s)]\,ds = 0 \quad \text{almost surely}.
\end{eqnarray*}
Thus,
\begin{eqnarray}\label{e:generator}
\int f\,d\nu - \int P_t f\,d\nu &=&\lim_{n\rightarrow \infty}\left(\int f\,d\nu_n - \int P_t f\,d\nu_n\right)\nonumber\\
&=&  \lim_{n\rightarrow \infty}\frac{1}{t_n} \left[\int_0^{t_n} (f(Z_s)-P_tf(Z_s))\,ds\right]\nonumber\\
&=&   \lim_{n\rightarrow \infty}\frac{1}{t_n} \left[\int_0^{t_n-t} (f(Z_{s+t})-P_tf(Z_s))\,ds  +\int_0^t f(Z_s)\,ds - \int_{t_n-t}^{t_n} P_t f(Z_s)\,ds  \right]\nonumber\\
&=&\lim_{n\rightarrow \infty}  \frac{1}{t_n} \left[\int_0^{t_n-t} (f(Z_{s+t})-P_tf(Z_s))\,ds \right]\nonumber\\
&= &0  \quad \text{almost surely}.
\end{eqnarray}
The last result is equivalent to saying that $\nu$ is almost surely a stationary distribution for $(X,Y)$.

\end{proof}
\begin{proposition}\label{p:stationary_R_++}
There exists a stationary distribution $\pi$ of $(X,Y)$ that assigns
all of its mass to $\R_{++}^2$.
\end{proposition}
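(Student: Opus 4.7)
The plan is to exploit the weak limit $\nu$ produced in \eqref{e:occupation_convergence}, which by the preceding proposition is almost surely a stationary distribution for $(X,Y)$ on $\R_+^2$. Because each of the four sets $\{(0,0)\}$, $\{(x,0):x>0\}$, $\{(0,y):y>0\}$, and $\R_{++}^2$ is invariant under the dynamics, and the one-dimensional diffusion on either axis coincides with $\bar X$ or $\bar Y$ of Proposition~\ref{P:one_population_stationary}, we may decompose
\[
\nu = q_0\,\delta_{(0,0)} + q_1\,(\rho_{\bar X}\otimes \delta_0) + q_2\,(\delta_0 \otimes \rho_{\bar Y}) + q_3\,\pi,
\]
with nonnegative $q_i$ summing to one and $\pi$ (when $q_3>0$) a probability measure on $\R_{++}^2$ that is itself $(X,Y)$-stationary. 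Corollary~\ref{C:Lyapunov_exponents_positive} ensures the two stationary distributions on the axes exist. To prove the proposition it therefore suffices to show $q_3>0$ almost surely, after which one picks any $\omega$ for which this holds and takes the corresponding realization of $\pi$.

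Suppose for contradiction $q_3=0$. Applying It\^o's formula to $\log Y_t$ for the SDE \eqref{e_MN_simplified} and dividing by $t_n$,
\[
\frac{\log Y_{t_n}}{t_n} = \frac{\log Y_0}{t_n} + \Bigl(\mu\cdot\beta - \tfrac{\sigma_Y^2}{2}\Bigr) - c\int x\,d\Pi_{t_n}(x,y) - b\int y\,d\Pi_{t_n}(x,y) + \frac{\sigma_Y V_{t_n}}{t_n}.
\]
The comparison $Y_t\le \bar Y_t$ from Theorem~\ref{T:comparison}, coupled with the computation $\lim_t \log \bar Y_t/t = 0$ afforded by Proposition~\ref{P:one_population_stationary}, gives $\limsup_t \log Y_t/t \le 0$ almost surely. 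Granting (for the moment) that $\int x\,d\Pi_{t_n}\to \int x\,d\nu$ and $\int y\,d\Pi_{t_n}\to \int y\,d\nu$, passing to the limit and substituting $\int x\,d\nu=q_1(\mu\cdot\alpha-\sigma_X^2/2)/a$ and $\int y\,d\nu=q_2(\mu\cdot\beta-\sigma_Y^2/2)/b$, we find the limit of the right-hand side equals $q_0(\mu\cdot\beta-\sigma_Y^2/2) + q_1\I(\alpha,\beta)$, which must be $\le 0$. By Corollary~\ref{C:Lyapunov_exponents_positive} we have $\mu\cdot\beta-\sigma_Y^2/2>0$, and by assumption $\I(\alpha,\beta)>0$; both coefficients are strictly positive, so $q_0=q_1=0$ and $\nu=\delta_0\otimes\rho_{\bar Y}$. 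An entirely parallel argument applied to $\log X_t$ then gives $\I(\beta,\alpha)\le 0$, contradicting the other hypothesis. Hence $q_3>0$.

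The principal technical obstacle is justifying the limits $\int x\,d\Pi_{t_n}\to \int x\,d\nu$ and similarly for $y$: weak convergence of probability measures on $\R_+^2$ does not control the unbounded linear functionals $x$ and $y$. I would resolve this by a uniform integrability argument built on the comparison $X_s\le \bar X_s$: since $\rho_{\bar X}$ is a Gamma distribution and therefore has finite second moment, the ergodic theorem applied to $\bar X^2$ yields
\[
\sup_n \int x^2\,d\Pi_{t_n}(x,y) \le \sup_n \frac{1}{t_n}\int_0^{t_n} \bar X_s^2\,ds < \infty \quad \text{a.s.,}
\]
whence $\int x\,\ind\{x>K\}\,d\Pi_{t_n}\le K^{-1}\sup_n \int x^2\,d\Pi_{t_n}$ is uniformly small in $n$ for large $K$. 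Combining this tail bound with the weak convergence $\Pi_{t_n}\Rightarrow \nu$ applied to bounded continuous truncations of $x$ delivers the required convergence of means, and the argument for $y$ is identical.
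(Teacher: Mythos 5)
Your proposal is correct and follows essentially the same route as the paper: decompose the weak limit $\nu$ of the occupation measures over the invariant boundary sets, compute $\lim_n \log Y_{t_n}/t_n$ (resp.\ $\log X_{t_n}/t_n$) via It\^o's formula together with convergence of $\int x\,d\Pi_{t_n}$ and $\int y\,d\Pi_{t_n}$, and use the comparison with $\bar X,\bar Y$ plus positivity of $\I(\alpha,\beta)$ and $\I(\beta,\alpha)$ to force the boundary masses to vanish. The only cosmetic differences are that you start from $\log Y$ rather than $\log X$ and justify the convergence of the first moments by a second-moment bound on $\bar X$ under its Gamma stationary law, where the paper uses Birkhoff's theorem applied to the truncated first moment $\bar X_s\ind\{\bar X_s>K\}$; both are valid.
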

\begin{proof}
We argue by contradiction. Because the process stays in one of the
four sets $\R_{++}$, $\R_{++} \times \{0\}$, $\{0\} \times  \R_{++}$,
$\{(0,0)\}$ when it is started in the set, any stationary distribution
for $(X,Y)$ thought of as a process on $\R_+^2$
can be written as a convex combination of stationary distributions that
respectively assign all of their masses to one of the four sets, should such
a stationary distribution exist  for the given set.
Suppose there is no stationary distribution that
is concentrated on $\R_{++}^2$. Then, any stationary distribution  is the convex combination of stationary distributions that respectively assign
all of their mass to the three sets
$\R_{++} \times \{0\}$, $\{0\} \times  \R_{++}$,
and $\{(0,0)\}$, and hence any stationary distribution is
of the form
\begin{equation*}
p_X \mu_X + p_Y \mu_Y + p_0 \delta_{(0,0)},
\end{equation*}
where the random variables $p_X, p_Y, p_0$ are nonnegative and
$p_X+p_Y + p_0 =1$ almost surely, and $\mu_X = \rho_{\bar X} \otimes \delta_0$ and  $\mu_Y = \delta_0 \otimes \rho_{\bar Y}$ for $\rho_{\bar X}$ and $\rho_{\bar Y}$ the unique stationary distributions of $\bar X$ and $\bar Y$. Next, we proceed as in Proposition \ref{P:M extinction} to find the limit of $\frac{\log X_{t_n}}{t_n}$.
Let us first argue that
\begin{eqnarray}\label{e:ergodic}
\lim_{n\rightarrow \infty}\frac{1}{t_n} \int_0^{t_n} X_s\,ds &=& \int_{\R_{+}^2} x \, \nu(dx,dy)\\
\lim_{n\rightarrow \infty}\frac{1}{t_n} \int_0^{t_n} Y_s\,ds &=& \int_{\R_{+}^2} y \, \nu(dx,dy) \quad \text{almost surely}.\nonumber
\end{eqnarray}

Note that the infinitesimal generator of $(\log X, \log Y)$ thought
of as a process on $\R^2$ is uniformly
elliptic with smooth coefficients and so it has smooth transition densities
(see, for example, Section 3.3.4 of \cite{MR2410225}). Moreover,
an application of a suitable minimum principle for the Kolmogorov
forward equation (see, for example, Theorem 5 in Section 2 of Chapter 2
of \cite{MR0181836}) shows that the transition densities are
everywhere strictly positive.  It follows that $(X,Y)$
thought of as a process on $\R_{+}^2$ has smooth transition densities
that are everywhere positive.

Because the process
$\bar X$ also has smooth, every positive transition densities for
similar reasons, the almost sure behavior of the $\bar X$ started from a fixed point is the same as it is starting from its stationary distribution $\rho_{\bar X}$. As a result, we get by Birkhoff's pointwise
ergodic theorem \cite[Theorem~10.6]{K02} that, for all $K>0$,
\begin{equation*}
 \lim_{n \to \infty} \frac{1}{t_n} \int_0^{t_n} \bar X_s \ind\{\bar X_s > K\} \, ds = \E^{\rho_{\bar X}} [\bar X_s \ind\{\bar X_s > K\}]
\end{equation*}
$\Pr^{x}$ almost surely for any $x\in\R_+$. Therefore, by dominated convergence
\begin{eqnarray*}
\lim_{K \to \infty} \lim_{n \to \infty} \frac{1}{t_n} \int_0^{t_n} \bar X_s \ind\{\bar X_s > K\} \, ds  = \lim_{m \to \infty} \E_{\rho_{\bar X}} [\bar X_s \ind\{\bar X_s > K\}]= 0.
\end{eqnarray*}

The following inequalities are immediate due to the positivity of the terms
\begin{equation}
\label{e:ineq}
\begin{split}
\frac{1}{t_n} \int_0^{t_n} X_s \ind \{ X_s \leq K\} \,ds
&\leq \frac{1}{t_n} \int_0^{t_n} X_s\, ds\\
&= \frac{1}{t_n} \int_0^{t_n} X_s \ind \{ X_s \leq K\} \,ds \\
& \quad + \frac{1}{t_n}  \int_0^{t_n} X_s \ind \{ X_s > K\}\, ds. \\
\end{split}
\end{equation}
Recall that $X_t\leq \bar X_t$ for all $t\geq 0$
and hence
\begin{equation*}
\frac{1}{t_n} \int_0^{t_n} X_s \ind \{ X_s > K\} \,ds \leq \frac{1}{t_n} \int_0^{t_n} \bar X_s \ind \{ \bar X_s > K\} \,ds.
\end{equation*}
This implies
\begin{equation*}
\limsup_{n\rightarrow\infty} \frac{1}{t_n} \int_0^{t_n} X_s \ind \{ X_s > K\} \,ds \leq \limsup_{n\rightarrow\infty} \frac{1}{t_n} \int_0^{t_n} \bar X_s \ind \{ \bar X_s > K\}\, ds,
\end{equation*}
and therefore
\begin{equation}
\label{e:limsup}
\begin{split}
0 & \leq \lim_{K\rightarrow \infty} \limsup_{n\rightarrow\infty} \frac{1}{t_n} \int_0^{t_n} X_s \ind \{ X_s > K\} \,ds \\
& \leq \lim_{K\rightarrow \infty} \limsup_{n\rightarrow\infty} \frac{1}{t_n} \int_0^{t_n} \bar X_s \ind \{ \bar X_s > K\} \,ds =0. \\
\end{split}
\end{equation}
By \eqref{e:occupation_convergence}
and Theorem 4.27 of \cite{K02},
\begin{equation*}
\lim_{n\rightarrow\infty} \frac{1}{t_n} \int_0^{t_n} X_s \ind \{ X_s \leq K\}\, ds = \int_{\R_{++}^2} x \ind\{x\leq K\}\, \nu(dx,dy).
\end{equation*}
for any $K$ such that
\[
\nu(\{K\}\times \R_+)=0.
\]
While this last condition need not hold a priori
for all $K$, we can only have
\[
\nu(\{K\}\times \R_+)>0
\]
for countably many $K$, so there exists a sequence $(K_m)\subset \R_+$ such that $K_m\rightarrow \infty$ as $m\rightarrow\infty$ with
\[
\nu(\{K_m\}\times \R_+)=0.
\]

By dominated convergence,
\begin{equation}\label{e:dominated}
\begin{split}
\lim_{m\rightarrow \infty } \lim_{n\rightarrow\infty} \frac{1}{t_n} \int_0^{t_n} X_s \ind \{ X_s \leq K_m\}\, ds
& = \lim_{K\rightarrow \infty } \int_{\R_{+}^2} x \ind\{x\leq K\} \,\nu(dx,dy) \\
& = \int_{\R_{+}^2} x \,\nu(dx,dy). \\
\end{split}
\end{equation}
Combining \eqref{e:ineq}, \eqref{e:limsup} and \eqref{e:dominated} gives \eqref{e:ergodic}.

It follows from It\^o's formula,
the observation $\I(\alpha,\alpha)=0$, \eqref{e:ergodic},
and the fact that $\lim_{n\rightarrow\infty}\frac{U_{t_n}}{t_n}=0$
that
\begin{eqnarray*}
\lim_{n\rightarrow \infty} \frac{\log X_{t_n}}{t_n} &=& \mu\cdot\alpha - \frac{\sigma_X^2}{2} - \E^\nu [aX_t + bY_t]\\
&=& p_X\left(\mu\cdot\alpha -a\E^{\bar \rho_X}[X_t] - \frac{\sigma_X^2}{2}\right)\\
&~&+~p_Y\left(\mu\cdot\alpha -b\E^{\bar \rho_Y}[Y_t]-\frac{\sigma_X^2}{2}\right) + p_0 \left(\mu\cdot\alpha -\frac{\sigma_X^2}{2}\right)\\
&=& p_X \I(\alpha,\alpha) + p_Y \I(\alpha,\beta) + p_0\left(\mu\cdot\alpha -\frac{\sigma_X^2}{2}\right)\\
&=& p_Y \I(\alpha,\beta) + p_0\left(\mu\cdot\alpha -\frac{\sigma_X^2}{2}\right)
\quad \text{almost surely}.
\end{eqnarray*}

By assumption, $\I(\alpha,\beta)>0$ and
we have already observed  that $\mu\cdot\alpha -\frac{\sigma_X^2}{2}>0$. Because  $\bar X_t$ converges in distribution as $t \to \infty$
to a distribution that assigns all of its mass to $\R_{++}^2$, it follows that
$\frac{\log \bar X_{t_n}}{t_n}$ converges in probability to $0$.
However, since
$X_t \le \bar X_t$ for all $t \ge 0$ it follows that
$p_Y \I(\alpha,\beta) + p_0\left(\mu\cdot\alpha -\frac{\sigma_X^2}{2}\right)
\le 0$ and hence
\begin{equation}
p_Y=p_0=0 \quad \text{almost surely}.
\end{equation}
The same argument applied to $(Y_t)_{t\geq 0}$ establishes
\begin{equation}
p_X=p_0=0 \quad \text{almost surely}.
\end{equation}
Therefore, $p_X=p_Y=p_0=0$, and this
contradicts the assumption that $p_X+p_Y+p_0=1$.
\end{proof}

We can now finish the proof of Theorem \ref{T:stationary_positive_Lyapunov}.
\begin{proof}
Proposition \ref{p:stationary_R_++} implies that $(X,Y)$ has a stationary distribution $\pi$ on $\R_{++}^2$.
By Theorem 20.17 from \cite{K02}, our process $(X,Y)$ is either Harris recurrent or uniformly transient. We say that
 $(X_t,Y_t)\rightarrow \infty$ almost surely as $t\rightarrow \infty$ if  $\ind_K(X_t,Y_t)\rightarrow 0$ as $t \to \infty$
 for any compact set $K\subset \R_{++}^2$.
Theorem 20.21 from \cite{K02} gives that if $(X,Y)$ is transient, then $(X_t,Y_t)\rightarrow \infty$ and so  $(X,Y)$ cannot have a stationary distribution. Hence, since we know our process has a stationary distribution $\pi$, it must be Harris recurrent.  Theorem 20.21 from \cite{K02} then gives us equation \eqref{e_pathwise_limit}.

Theorem 20.18 from \cite{K02}, 20.18 gives that any Harris recurrent Feller process on $\R_{++}^2$ with strictly positive transition densities
has a locally finite invariant measure that is equivalent to Lebesgue measure and is unique up to a normalization. We already know that we have a stationary distribution, so this distribution is unique and has an almost everywhere strictly positive density with respect to Lebesgue measure. Theorem 20.12 from \cite{K02} says that any Harris recurrent Feller process is strongly ergodic, and so equation \eqref{e_ergodic} holds.

\end{proof}
\begin{remark}
In Theorem 3.1 of \cite{ZC13}, the authors claim to show that the system of SDE describing $(X,Y)$ always has a unique stationary distribution. We note that their use of moments just checks tightness in $\R_{+}^2:=[0,\infty)^2$ and not in $\R_{++}^2 = (0,\infty)^2$. It does not stop mass going off to
$\R_{+}^2 \setminus \R_{++}^2 = (\R_+ \times \{0\}) \cup (\{0\} \times \R_+)$, which is exactly what can happen in our case. Thus, their proof only shows the existence of a stationary distribution on $\R_{+}^2$ -- it does not show the existence of a stationary distribution on $\R_{++}^2$. Furthermore, their proof for the uniqueness of a stationary distribution on $\R_{+}^2$ breaks down because their assumption of irreducibility is false. The process $(X,Y)$ is irreducible on $\R_{++}^2$, but it is not irreducible on $\R_{+}^2$ since $P_t((0,0),U):=\Pr^{(0,0)}\{(X_t,Y_t)\in U\}=0$ for any open subset $U$ that lies in the interior of $\R_{+}^2$.
If we work on $\R_{+}^2$, it is not true that the diffusion $(X,Y)$ has a unique stationary distribution. We can obtain infinitely many stationary distributions on $\R_{+}^2$ of the form $(u \rho_{\bar X} + v \delta_0)\otimes \delta_0$ where $\rho_{\bar X}$ is the unique stationary distribution of $\bar X$ on $\R_{++}$ and $u,v\in \R_+$ satisfy $u+v=1$.
\end{remark}

\section*{Appendix D: Proof of Theorem~\ref{T:ESS}}

Assume that the matrix $\Sigma$ is positive definite and
that the dispersion proportion vector
$\alpha$ is such that $\mu\cdot \alpha- \alpha\cdot \Sigma \alpha/2>0$
so that a population playing the strategy $\alpha$ persists. Under these
assumptions the function $\beta \mapsto \I (\alpha,\beta)$ is strictly concave. Hence, by the method of Lagrange multipliers, $\I(\alpha,\beta)<0$ for all $\beta\neq \alpha$ and $\alpha_i>0$ for all $i$ if and only if there exists a constant, which we denote by $\lambda$, such that
\begin{equation}\label{eq:lagrange}
\left.\lambda = \frac{\partial \I}{\partial \beta_i } (\alpha,\beta)\right\vert_{\beta=\alpha} = \mu_i - \kappa_i \alpha_i (\mu\cdot \alpha - \alpha\cdot \Sigma \alpha/2)/\la \alpha,\alpha\ra_\kappa  - \sum_j \alpha_j \sigma_{ij}
\end{equation}
for all $i$. Multiplying \eqref{eq:lagrange} by $\alpha_i$ and summing with respect to $i$, we get
\[
\begin{aligned}
\lambda &= \mu \cdot \alpha - \la \alpha,\alpha\ra_\kappa (\mu\cdot \alpha - \alpha\cdot \Sigma \alpha/2)/\la \alpha,\alpha\ra_\kappa  - \alpha \cdot \Sigma \alpha\\
&=- \alpha \cdot \Sigma \alpha/ 2
\end{aligned}
\]
This expression for the Lagrange multiplier and \eqref{eq:lagrange} provide the characterization of a mixed ESS in equation \eqref{E:ESS} when $\alpha_i>0$ for all $i$. The characterization of the more general case of $\alpha_i>0$ for at least two patches follows similarly by restricting the method of Lagrange multiples to the appropriate face of the
probability simplex.

Suppose that $\mu_i -\sigma_{ii}/2>0$ so that a population remaining in patch $i$ and not dispersing to other patches persists. The strategy $\alpha_i=1$ and $\alpha_j=0$ for all $j\neq i$ is an ESS only if
\[
\left.\frac{\partial \I}{\partial \beta_j }(\alpha,\beta)\right\vert_{\beta=\alpha}- \left.\frac{\partial \I }{\partial \beta_i}(\alpha,\beta)\right\vert_{\beta=\alpha}<0
\]
for all $j\neq i$. Evaluating these partial derivatives gives the criterion  \eqref{E:ESS2} for the pure ESS.

We conclude by considering the case $n=2$. Define the function $g:[0,1]\to \mathbb{R}$ by
\begin{equation*}
\begin{split}
g(a)&= \left.\frac{\partial \I}{\partial \beta_1}((a_1,a_2),(b_1,b_2))\right\vert_{(a_1,a_2)=(a,1-a), (b_1,b_2)=(a,1-a)}\\
&~-\left.\frac{\partial \I}{\partial \beta_2}((a_1,a_2),(b_1,b_2))\right\vert_{(a_1,a_2)=(a,1-a), (b_1,b_2)=(a,1-a)}.\\
\end{split}
\end{equation*}
The inequalities \eqref{E:ESS2} for the pure strategies $(1,0)$ and $(0,1)$, respectively, correspond to $g(0)<0$ and $g(1)>0$, respectively. Hence, when these inequalities are reversed, the intermediate value theorem implies there exists $a\in (0,1)$ such that $g(a)=0$. Such an $a$ satisfies the mixed ESS criterion \eqref{E:ESS} and, therefore, is an ESS.

\bigskip
\noindent
{\bf Acknowledgments.}  The authors thank Dan Crisan, Alison Etheridge, Tom Kurtz, and Gregory Roth for helpful discussions.

\end{document}